\newtheorem{theorem}{Theorem}[section]
\newtheorem{lemma}[theorem]{Lemma}
\newtheorem{prop}[theorem]{Proposition}
\newtheorem{corollary}[theorem]{Corollary}
\newtheorem{problem}[theorem]{Problem}
\theoremstyle{definition}
\newtheorem{definition}[theorem]{Definition}
\theoremstyle{remark}
\newtheorem{example}[theorem]{Example}
\newtheorem{remark}[theorem]{Remark}
\newtheorem*{ackn}{Acknowledgements}
\DeclareMathOperator{\Tor}{Tor}
\DeclareMathOperator{\Ext}{Ext}
\DeclareMathOperator{\VdB}{VdB}
\DeclareMathOperator{\codim}{codim}
\newcommand*{\ptens}[1]{\mathop{\widehat\otimes}_{#1}}
\newcommand*{\Ptens}{\mathop{\widehat\otimes}}
\newcommand*{\tens}[1]{\mathop{\otimes}_{#1}}
\newcommand*{\lmod}{\mbox{-}\!\mathop{\mathsf{mod}}}
\newcommand*{\rmod}{\mathop{\mathsf{mod}}\!\mbox{-}}
\newcommand*{\bimod}{\mbox{-}\!\mathop{\mathsf{mod}}\!\mbox{-}}
\newcommand*{\Vect}{\mathsf{Vect}}
\newcommand*{\id}{1}
\renewcommand*{\dh}{\mathop{\mathrm{dh}}}
\newcommand*{\db}{\mathop{\mathrm{db}}}
\newcommand*{\dg}{\mathop{\mathrm{dg}}}
\newcommand*{\wdh}{\mathop{\mathrm{w.dh}}}
\newcommand*{\wdb}{\mathop{\mathrm{w.db}}}
\newcommand*{\wdg}{\mathop{\mathrm{w.dg}}}
\newcommand*{\h}{\mathbf h}
\newcommand*{\CC}{\mathbb C}
\newcommand*{\N}{\mathbb N}
\newcommand*{\Z}{\mathbb Z}
\newcommand*{\cO}{\mathscr O}
\newcommand*{\cP}{\mathscr P}
\newcommand*{\cF}{\mathscr F}
\newcommand*{\cG}{\mathscr G}
\newcommand*{\cI}{\mathscr I}
\newcommand*{\cH}{\mathscr H}
\newcommand*{\cN}{\mathscr N}
\newcommand*{\cL}{\mathscr L}
\newcommand*{\cT}{\mathscr T}
\newcommand*{\cLB}{\mathscr{LB}}
\DeclareMathOperator{\Ker}{Ker}
\DeclareMathOperator{\Coker}{Coker}
\renewcommand*{\Im}{\mathop{\mathrm{Im}}}
\newcommand*{\cIm}{\mathop{\mathscr Im}}
\newcommand*{\cTor}{\mathop{\mathscr T\!or}\nolimits}
\newcommand*{\cHom}{\mathop{\mathscr H\!om}\nolimits}
\newcommand*{\cExt}{\mathop{\mathscr Ext}\nolimits}
\newcommand*{\eps}{\varepsilon}
\newcommand*{\ol}{\overline}
\newenvironment{mycompactenum}{\pltopsep=5pt\begin{compactenum}[\upshape (i)]}%
{\end{compactenum}}
\newcommand*{\lar}{\leftarrow}
\newcommand*{\xla}{\xleftarrow}
\newcommand*{\xra}{\xrightarrow}
\begin{document}
\title[Homological dimensions of modules of holomorphic functions]{Homological
dimensions\\ of modules of holomorphic functions\\ on submanifolds of Stein manifolds}
\subjclass[2010]{Primary 46M18; secondary 46H25, 46E25, 16E10, 16E30, 16E40}
\author{A. Yu. Pirkovskii}
\address{Faculty of Mathematics\\
National Research University Higher School of Economics\\
Vavilova 7, 117312 Moscow, Russia}
\email{aupirkovskii@hse.ru, pirkosha@online.ru}
\thanks{This work was partially supported by the Ministry of Education
and Science of Russia (programme ``Development of the scientific
potential of the Higher School'', grant no. 2.1.1/2775).}
\date{}
\begin{abstract}
Let $X$ be a Stein manifold, and let $Y\subset X$ be a closed complex
submanifold. Denote by $\cO(X)$ the algebra of holomorphic functions on $X$.
We show that the weak (i.e., flat) homological dimension of
$\cO(Y)$ as a Fr\'echet $\cO(X)$-module equals the codimension of $Y$ in $X$.
In the case where $X$ and $Y$ are of Liouville type, the same formula
is proved for the projective homological dimension of $\cO(Y)$ over $\cO(X)$.
On the other hand, we show that if $X$ is of Liouville type and
$Y$ is hyperconvex, then the projective homological dimension of $\cO(Y)$
over $\cO(X)$ equals the dimension of $X$.
\end{abstract}
\maketitle

\section{Introduction}

This paper is motivated by the following fact from commutative algebra.
Let $X$ be a nonsingular affine algebraic variety over $\CC$, and let
$\cO(X)$ denote the algebra of regular functions on $X$.
It is well known and easy to show
that, for each nonsingular closed algebraic subvariety $Y\subset X$, the projective
homological dimension of $\cO(Y)$ considered as a module over $\cO(X)$ is equal
to the codimension of $Y$ in $X$:
\begin{equation}
\label{motiv}
\dh\nolimits_{\cO(X)} \cO(Y)=\codim_X Y.
\end{equation}
Indeed, let $\cI_Y\subset\cO_X$ denote the ideal sheaf of $Y$ in $X$.
Since $Y$ is a local complete intersection in $X$ \cite[8.22.1]{Hart},
it follows that for each $y\in Y$ the ideal $\cI_{Y,y}$ of the local ring $\cO_{X,y}$
is generated by a regular sequence of length $m=\codim_X Y$.
Hence we have $\dh_{\cO_{X,y}}\cO_{Y,y}=m$
\cite[3.8, Theorem 22]{Northcott2}. The global formula~\eqref{motiv} now follows
from~\cite[9.2, Theorem 11]{Northcott}. Note also that, since $\cO(X)$ is
Noetherian, we also have
\begin{equation}
\label{motiv2}
\wdh\nolimits_{\cO(X)} \cO(Y)=\codim_X Y,
\end{equation}
where $\wdh$ stands for the weak (i.e., flat) homological dimension.

In \cite{Pir_Nova}, we proved \eqref{motiv} in the
situation where $X$ is a smooth real manifold, $\cO(X)=C^\infty(X)$ is
the algebra of smooth functions on $X$, and $Y\subset X$
is a closed smooth submanifold. Now the formula~\eqref{motiv}
should be understood in the context of ``Topological Homology'', i.e.,
a relative homological algebra in categories of Fr\'echet modules over
Fr\'echet algebras \cite{X1}.
The above-mentioned localization technique is not applicable in this case,
so we had to develop an essentially different proof which heavily relied
on the softness of the structure sheaf $C^\infty_X$.
The formula~\eqref{motiv2} also easily follows from the results
of~\cite{Pir_Nova} (see Remark~\ref{rem:wdh_C^infty} for details).

Our goal here is to study complex analytic analogues of~\eqref{motiv}
and~\eqref{motiv2} in the context of Topological Homology.
Suppose that $X$ is a complex Stein manifold,
$\cO(X)$ is the Fr\'echet algebra of holomorphic functions on $X$, and
$Y\subset X$ is a closed analytic submanifold.
In Section~\ref{sect:weak}, we show that~\eqref{motiv2} holds.
As a byproduct, we obtain a complex analytic version of the
Hochschild-Kostant-Rosenberg Theorem~\cite{HKR}. Our proof essentially
uses the nuclearity of $\cO(X)$ and some results of O.~Forster~\cite{For}.

Surprisingly, the validity of \eqref{motiv} turns out to depend on
some special properties of $X$ and $Y$. In Section~\ref{sect:Liouv},
we show that \eqref{motiv} holds
provided that both $X$ and $Y$ are of Liouville type
(i.e., each bounded above plurisubharmonic function on $X$ and $Y$ is constant).
In particular, this is true whenever both $X$ and $Y$ are affine algebraic.
On the other hand, we show in Section~\ref{sect:hyper} that~\eqref{motiv}
fails in the general case. Specifically, assume that $X$ is of Liouville type
and that $Y$ is hyperconvex (this means that there exists a negative plurisubharmonic
exhaustion function on $Y$). We show that
$\dh_{\cO(X)} \cO(Y)=\dim X$ in this case.
For example, if $Y$ is the open disc embedded into $\CC^2$ (cf. \cite{Alexander}),
then $\dh_{\cO(\CC^2)} \cO(Y)=2$.

Our proofs heavily rely on the linear topological invariants $(DN)$, $(\Omega)$,
and $(\ol{\Omega})$, introduced by D.~Vogt in the 1970ies
(see, e.g., \cite[Chapter 8]{MV}). We essentially use the splitting theorem due to
D.~Vogt and M.~J.~Wagner~\cite{VW_quot}, the results of D.~Vogt \cite{Vogt_bdd} on bounded
linear maps between Fr\'echet spaces, and the results of V.~P.~Zakharyuta~\cite{Zah_iso},
D.~Vogt~\cite{Vogt_some_results}, and A.~Aytuna~\cite{Aytuna_sp_anal}
on linear topological properties
of spaces of holomorphic functions. Another essential ingredient is the
Van den Bergh isomorphism~\cite{VdB} between the Hochschild homology and
cohomology of $\cO(X)$, which is proved in Section~\ref{sect:VdB}.

\section{Preliminaries}
\label{sect:prelim}

This section gives a brief account of some basic facts from Topological Homology.
Our main reference is \cite{X1}; some details can also be found in
\cite{X2,X_HOA,T1,Eschm_Put,Pir_msb}.

Throughout, all vector spaces and algebras are assumed to be over the field $\CC$
of complex numbers. All algebras are assumed to be associative and
unital.
By a {\em Fr\'echet algebra} we mean an algebra $A$ endowed with
a complete, metrizable locally convex topology
(i.e., $A$ is an algebra and a Fr\'echet space simultaneously)
such that the product map $A\times A\to A$ is continuous.
If, in addition, $A$ is locally $m$-convex
(i.e., the topology on $A$ can be determined by a family
$\{\|\cdot\|_\lambda : \lambda\in\Lambda\}$ of
seminorms satisfying $\| ab\|_\lambda\le\| a\|_\lambda \| b\|_\lambda$
for all $a,b\in A$), then $A$ is said to be a
{\em Fr\'echet-Arens-Michael algebra}.

Let $A$ be a Fr\'echet algebra.
A {\em left Fr\'echet $A$-module} is a left $A$-module $M$ endowed with
a complete, metrizable locally convex topology in such a way that
the action $A\times M\to M$ is continuous.
We always assume that $1_A\cdot x=x$ for all $x\in M$, where $1_A$ is the identity of $A$.
Left Fr\'echet $A$-modules and their continuous morphisms form a category
denoted by $A\lmod$. Given $M,N\in A\lmod$,
the space of morphisms from $M$ to $N$ will be denoted by $\h_A(M,N)$.
We always endow $\h_A(M,N)$ with the topology of uniform convergence on bounded
subsets of $M$; note that this topology, in general, is not metrizable.
The categories $\rmod A$ and $A\bimod A$ of right
Fr\'echet $A$-modules and of Fr\'echet $A$-bimodules are defined similarly.
Note that $A\bimod A\cong A^e\lmod\cong\rmod A^e$, where $A^e=A\Ptens A^{\mathrm{op}}$,
and where $A^{\mathrm{op}}$ stands for the algebra opposite to $A$.

If $M$ is a right Fr\'echet $A$-module and $N$
is a left Fr\'echet $A$-module, then their {\em $A$-module tensor product}
$M\ptens{A}N$ is defined to be
the quotient $(M\Ptens N)/L$, where $L\subset M\Ptens N$
is the closed linear span of all elements of the form
$x\cdot a\otimes y-x\otimes a\cdot y$
($x\in M$, $y\in N$, $a\in A$).
As in pure algebra, the $A$-module tensor product can be characterized
by the universal property that, for each Fr\'echet space $E$,
there is a natural bijection between the set of all
continuous $A$-balanced bilinear maps from $M\times N$ to $E$
and the set of all continuous linear maps from
$M\ptens{A}N$ to $E$.

A chain complex $C=(C_n,d_n)_{n\in\Z}$ in $A\lmod$ is {\em admissible} if
it splits in the category of topological vector spaces, i.e., if it has
a contracting homotopy consisting of continuous linear maps. Geometrically,
this means that $C$ is exact, and $\Ker d_n$ is a complemented subspace
of $C_n$ for each $n$.

Let $\Vect$ denote the category of vector spaces and linear maps.
A left Fr\'echet $A$-module $P$ is {\em projective}
(respectively, {\em strictly projective})
if the functor
$\h_A(P,-)\colon A\lmod\to\Vect$ takes admissible (respectively, exact) sequences
of Fr\'echet $A$-modules to exact sequences of vector spaces.
Similarly, a left Fr\'echet $A$-module $F$ is {\em flat}
(respectively, {\em strictly flat}) if the tensor product functor
$(-)\ptens{A} F\colon \rmod A\to\Vect$
takes admissible (respectively, exact) sequences
of Fr\'echet $A$-modules to exact sequences of vector spaces.
Clearly, each strictly projective (respectively, strictly flat)
Fr\'echet module is projective (respectively, flat).
It is also known that every projective Fr\'echet module is flat.

A {\em resolution} of $M\in A\lmod$ is a pair $(P,\eps)$
consisting of a nonnegative chain complex
$P$ in $A\lmod$ and a morphism $\eps\colon P_0\to M$ making the sequence
$P\xra{\eps} M\to 0$ into an admissible complex.
The {\em length} of $P$ is the minimum integer $n$
such that $P_i=0$ for all $i>n$, or $\infty$ if there is no such $n$.
If all the $P_i$'s are projective (respectively, flat), then
$(P,\eps)$ is called a {\em projective resolution}
(respectively, a {\em flat resolution}) of $M$.
It is a standard fact that $A\lmod$ has {\em enough projectives},
i.e., each left Fr\'echet $A$-module has a projective resolution.
The same is true of $\rmod A$ and $A\bimod A$.

If $M,N\in A\lmod$, then the space $\Ext^n_A(M,N)$ is defined to be the $n$th
cohomology of the complex $\h_A(P,N)$, where $P$ is a projective
resolution of $M$. Similarly, if $M\in\rmod A$ and $N\in A\lmod$, then
the space $\Tor_n^A(M,N)$ is defined to be the $n$th
homology of the complex $M\ptens{A} F$, where $F$ is a flat
resolution of $N$. The spaces $\Ext^n_A(M,N)$ and $\Tor_n^A(M,N)$
do not depend on the particular choice of $P$ and $F$
and have the usual functorial properties (see \cite{X1} for details).
If $M\in A\bimod A$, then the {\em $n$th
Hochschild cohomology} (respectively, \emph{homology}) of $A$ with coefficients in $M$
is defined by $\cH^n(A,M)=\Ext^n_{A^e}(A,M)$
(respectively, $\cH_n(A,M)=\Tor_n^{A^e}(M,A)$).

For each $M\in\rmod A$ and each $N\in A\lmod$ the tensor product $N\Ptens M$
is a Fr\'echet $A$-bimodule in a natural way, and there exist topological
isomorphisms
\begin{equation}
\label{Tor_Hoch}
\Tor_n^A(M,N)\cong\cH_n(A,N\Ptens M).
\end{equation}
If $M,N\in A\lmod$ and $M$ is a Banach module, then $\cL(M,N)$ is a Fr\'echet
$A$-bimodule in a natural way, and we have vector space isomorphisms
\begin{equation}
\label{Ext_Hoch}
\Ext^n_A(M,N)\cong\cH^n(A,\cL(M,N)).
\end{equation}

In some cases, the spaces $\Tor_n^A(M,N)$ can be computed via nonadmissible
resolutions. Let $N\in A\lmod$, and let $(F,\eps)$ be a pair
consisting of a nonnegative chain complex
$F$ in $A\lmod$ and a morphism $\eps\colon F_0\to N$ making the sequence
$F\xra{\eps} N\to 0$ into an exact complex.
Suppose also that all the $F_i$'s are flat Fr\'echet modules (so that $F$
is ``almost'' a flat resolution of $N$).
Take $M\in\rmod A$, and assume that either all the $F_i$'s are nuclear,
or both $A$ and $M$ are nuclear. Then we have
$\Tor_n^A(M,N)\cong H_n(M\ptens{A}F)$ (see \cite{T1} or \cite[3.1.13]{Eschm_Put}).

The {\em projective homological dimension} of $M\in A\lmod$ is the
minimum integer $n=\dh_A M\in\Z_+\cup\{\infty\}$
with the property that $M$ has a projective resolution of length $n$.
Similarly, the {\em weak homological dimension} of $M\in A\lmod$ is the
minimum integer $n=\wdh_A M\in\Z_+\cup\{\infty\}$
with the property that $M$ has a flat resolution of length $n$.
Equivalently,
\[
\begin{split}
\dh\nolimits_A M&=\min\{ n\in\Z_+ | \Ext_A^{n+1}(M,N)=0\;\forall\, N\in A\lmod\}\\
&=\min\{ n\in\Z_+ | \Ext_A^{p+1}(M,N)=0\;\forall\, N\in A\lmod,\;\forall p\ge n\};\\
\wdh\nolimits_A M &= \min\left\{ n\in\Z_+ \left|\;
\parbox{65mm}{%
$\Tor_{n+1}^A(N,M)=0$, and $\Tor_n^A(N,M)$\\
is Hausdorff $\forall\, N\in\rmod A$}
\right.\right\}\\
&= \min\left\{ n\in\Z_+ \left|\;
\parbox{65mm}{%
$\Tor_{p+1}^A(N,M)=0$, and $\Tor_n^A(N,M)$\\
is Hausdorff $\forall\, N\in\rmod A,\;\forall p\ge n$}
\right.\right\}.
\end{split}
\]
Note that $\dh_A M=0$ if and only if $M$ is projective, and
$\wdh_A M=0$ if and only if $M$ is flat.
Since each projective module is flat, we clearly have $\wdh_A M\le\dh_A M$.

The {\em global dimension} and the {\em weak global dimension} of $A$
are defined by
\begin{align*}
\dg A&=\sup\{ \dh\nolimits_A M \,|\, M\in A\lmod\},\\
\wdg A&=\sup\{ \wdh\nolimits_A M \,|\, M\in A\lmod\}.
\end{align*}
The {\em bidimension} and the {\em weak bidimension} of
$A$ are defined by
$\db A=\dh_{A^e} A$ and $\wdb A=\wdh_{A^e} A$, respectively.
We clearly have $\wdg A\le\dg A$ and $\wdb A\le\db A$.
It is also true (but less obvious) that $\dg A\le\db A$ and $\wdg A\le\wdb A$.

Throughout the paper, all complex manifolds are assumed to be connected
(although most of our results can easily be extended to manifolds having finitely
many components of the same dimension). The structure sheaf of a complex manifold $X$
will always be denoted by $\cO_X$. The phrase ``an $\cO_X$-module'' will mean
``a sheaf of $\cO_X$-modules''. Recall that $\cO(X)$ is a nuclear Fr\'echet algebra with
respect to the compact-open topology (i.e., the topology of uniform convergence
on compact subsets of $X$). Recall also (see, e.g., \cite[5.6]{GR_II}) that, for
each coherent $\cO_X$-module $\cF$, the space of global sections $\cF(X)=\Gamma(X,\cF)$
has a canonical topology making $\cF(X)$ into a nuclear Fr\'echet $\cO(X)$-module.

If $X$ is a Stein manifold, then
a Fr\'echet $\cO(X)$-module $F$ is a {\em Stein module} if it is topologically
isomorphic to $\cF(X)$ for some coherent $\cO_X$-module $\cF$.
By a result of O.~Forster~\cite[2.1]{For}, the functor $\Gamma(X,\,\cdot\,)$ of global
sections is an equivalence between the category of coherent $\cO_X$-modules
and the full subcategory of $\cO(X)\lmod$ consisting of Stein $\cO(X)$-modules.

\section{Forster resolution and weak dimension}
\label{sect:weak}

The following lemma is an easy consequence of O.~Forster's results \cite{For}.
We formulate it here for the reader's convenience.

\begin{lemma}
Let $X$ be a Stein manifold, and let $Y\subset X$ be a closed submanifold of
codimension~$m$. There exists an exact sequence
\begin{equation}
\label{For_res_sh}
0\lar \cO_Y \lar \cP_0 \lar \cP_1 \lar \cdots \lar \cP_m \lar 0
\end{equation}
where $\cP_0,\ldots ,\cP_m$ are locally free $\cO_X$-modules.
\end{lemma}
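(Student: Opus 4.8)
The plan is to glue the local Koszul resolutions coming from the complete-intersection structure of $Y$ into a single finite global resolution, the gluing being made possible by the vanishing theorems available on the Stein manifold $X$. First I would record the local picture: since $Y$ is a closed submanifold of codimension $m$, each point $x\in Y$ has a coordinate neighbourhood in which $Y$ is the common zero set of $m$ of the coordinate functions, so the stalk $\cI_{Y,x}$ of the ideal sheaf is generated by a regular sequence of length $m$ in the regular local ring $\cO_{X,x}$, and the associated Koszul complex is a free resolution of $\cO_{Y,x}$ of length $m$. Hence $\dh_{\cO_{X,x}}\cO_{Y,x}=m$ for $x\in Y$, while for $x\notin Y$ the stalk $\cO_{Y,x}$ vanishes; in either case $\dh_{\cO_{X,x}}\cO_{Y,x}\le m$.

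Next I would produce a locally free resolution globally. Because $X$ is Stein, Cartan's Theorem A together with Forster's finite-generation results~\cite{For} show that every coherent $\cO_X$-module is a quotient of a free $\cO_X$-module of finite rank. Applying this to $\cO_Y$ and iterating on the successive (coherent) kernels yields an exact sequence
\[
\cdots \lra \cP_1 \lra \cP_0 \lra \cO_Y \lra 0
\]
in which every $\cP_i$ is locally free of finite rank.

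The decisive step is a truncation at stage $m$. Setting $\cP_{-1}:=\cO_Y$ and $\cP_m:=\Ker(\cP_{m-1}\to\cP_{m-2})$, I obtain an exact sequence
\[
0 \lra \cP_m \lra \cP_{m-1} \lra \cdots \lra \cP_0 \lra \cO_Y \lra 0 .
\]
Passing to stalks (an exact operation) and invoking $\dh_{\cO_{X,x}}\cO_{Y,x}\le m$, dimension shifting in the long exact $\Ext$-sequence gives $\Ext^1_{\cO_{X,x}}(\cP_{m,x},-)\cong\Ext^{m+1}_{\cO_{X,x}}(\cO_{Y,x},-)=0$, so each stalk $\cP_{m,x}$ is a finitely generated projective, hence free, $\cO_{X,x}$-module. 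As $\cP_m$ is coherent and has free stalks, Nakayama's lemma makes it locally free, and the displayed sequence is the required resolution~\eqref{For_res_sh}.

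The hard part will be the global construction in the second step: one must know that coherent sheaves on $X$ are generated by \emph{finitely many} global sections, so that the $\cP_i$ can be taken of finite rank and their kernels remain coherent. This finiteness is exactly what Forster's results supply; granting it, the local Koszul computation and the syzygy truncation are routine.
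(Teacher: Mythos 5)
Your proposal is correct and follows essentially the same route as the paper: produce a free (finite-rank) resolution of $\cO_Y$ via Forster's results, truncate at the $m$-th syzygy, and use the local regular-sequence/Koszul computation $\dh_{\cO_{X,x}}\cO_{Y,x}\le m$ to see that the truncating sheaf has projective, hence free, stalks and is therefore locally free. The only cosmetic difference is that you phrase the truncation as a kernel and make the dimension-shifting explicit, where the paper takes the image $\cIm d_{m-1}$ and reads off projectivity directly from the local exact sequence.
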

\begin{proof}
By \cite[6.4]{For}, there exists an exact sequence
\[
0\lar \cO_Y \lar \cF_0 \xla{d_0} \cF_1 \lar \cdots \lar \cF_{m-1} \xla{d_{m-1}} \cF_m
\xla{d_m} \cdots
\]
where $\cF_0,\cF_1,\ldots$ are free $\cO_X$-modules.
Letting $\cP_m=\cIm d_{m-1}$, we obtain an exact sequence
\begin{equation}
\label{For_res_sh_1}
0\lar \cO_Y \lar \cF_0 \lar \cF_1 \lar \cdots \lar \cF_{m-1}\lar \cP_m \lar 0.
\end{equation}
To complete the proof, it remains to show that $\cP_m$ is locally free.
Fix $x\in X$, and consider the exact sequence
\begin{equation}
\label{For_res_loc}
0\lar \cO_{Y,x} \lar \cF_{0,x} \lar \cF_{1,x} \lar \cdots \lar
\cF_{m-1,x}\lar \cP_{m,x} \lar 0
\end{equation}
of $\cO_{X,x}$-modules.
If $x\notin Y$, then $\cO_{Y,x}=0$, whence \eqref{For_res_loc} splits,
and so $\cP_{m,x}$ is projective. Now suppose that $x\in Y$, and
let $\cI\subset\cO_X$ denote the ideal sheaf of $Y$.
We have $\cO_{Y,x}\cong\cO_{X,x}/\cI_x$.
Choose a local
coordinate system $z^1,\ldots ,z^n$ in a neighborhood $U$ of $x$ such that
$Y\cap U=\{ z\in U : z^1=\cdots =z^m=0\}$. The $\cO_{X,x}$-module $\cI_x$
is generated by the regular sequence $(z^1,\ldots ,z^m)$, which implies that
$\dh_{\cO_{X,x}}\cO_{Y,x}=m$
(see, e.g., \cite[3.8, Theorem 22]{Northcott2})\footnote[1]{Here, of course,
the projective homological dimension $\dh_{\cO_{X,x}}\cO_{Y,x}$ should be understood
in the purely algebraic context.}.
By looking at \eqref{For_res_loc}, we conclude that $\cP_{m,x}$ is projective.

Thus for each $x\in X$ the $\cO_{X,x}$-module $\cP_{m,x}$ is projective.
Since $\cO_{X,x}$ is local, this means that $\cP_{m,x}$ is free.
Therefore $\cP_m$ is a locally free $\cO_X$-module, as required.
\end{proof}

\begin{corollary}
\label{cor:For-res}
Let $X$ be a Stein manifold, and let $Y\subset X$ be a closed submanifold of
codimension~$m$. There exists an exact sequence
\begin{equation}
\label{For_res}
0\lar \cO(Y) \lar P_0 \lar P_1 \lar \cdots \lar P_m \lar 0
\end{equation}
of Fr\'echet $\cO(X)$-modules,
where $P_0,\ldots ,P_m$ are finitely generated and strictly projective.
\end{corollary}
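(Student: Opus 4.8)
The plan is to deduce \eqref{For_res} from the sheaf resolution \eqref{For_res_sh} of the preceding lemma by applying the global section functor $\Gamma(X,\,\cdot\,)$, and then to extract the stated properties of the resulting modules from Forster's equivalence together with the cohomology of coherent sheaves on Stein manifolds. Concretely, I would set $P_i=\Gamma(X,\cP_i)$ for $0\le i\le m$ and use the identification $\Gamma(X,\cO_Y)=\cO(Y)$, so that \eqref{For_res} is precisely the image of \eqref{For_res_sh} under $\Gamma(X,\,\cdot\,)$.

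To establish exactness, I would first observe that every sheaf appearing in \eqref{For_res_sh} is coherent: the $\cP_i$ are coherent by the lemma and $\cO_Y$ is coherent, so each image sheaf $\cIm(\cP_i\to\cP_{i-1})$ is coherent, being the image of a morphism of coherent sheaves. Splitting \eqref{For_res_sh} into short exact sequences of coherent sheaves and invoking Cartan's Theorem~B, which asserts that $H^q(X,\cF)=0$ for all $q\ge 1$ and every coherent $\cF$ on the Stein manifold $X$, I would conclude that $\Gamma(X,\,\cdot\,)$ sends each short exact sequence to a short exact sequence of Fr\'echet $\cO(X)$-modules. Splicing these together gives the exactness of \eqref{For_res}.

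It then remains to show that each $P_i$ is finitely generated and strictly projective. Since $\cP_i$ is at once coherent and locally free, it is locally free of finite rank, that is, a holomorphic vector bundle on $X$. By the structure theory of such bundles on a Stein manifold, $\cP_i$ is a direct summand of a trivial bundle $\cO_X^{k_i}$; applying the exact, additive functor $\Gamma(X,\,\cdot\,)$ and Forster's equivalence, $P_i$ is realized as a topological direct summand of $\cO(X)^{k_i}$, hence is a finitely generated projective Fr\'echet $\cO(X)$-module. To pass from projectivity to strict projectivity, I would use that $\cO(X)$ is itself strictly projective---because $\h_{\cO(X)}(\cO(X),-)$ is naturally isomorphic to the identity functor and is therefore exact on all exact sequences---and that strict projectivity is preserved under finite direct sums and retracts; thus every finitely generated projective module, being a retract of some $\cO(X)^{k_i}$, is strictly projective.

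The step I expect to be the main obstacle is the finite generation of the $P_i$: one must know not merely that the $\cP_i$ are locally free, but that their global sections form \emph{finitely} generated projective modules. This hinges on the Stein-theoretic fact that a holomorphic vector bundle on a finite-dimensional Stein manifold is generated by finitely many global sections and hence is complemented in a trivial bundle. Once that fact is available, finite generation, projectivity, and strict projectivity all follow at once, and the only remaining point is the routine observation that Forster's equivalence turns the resulting splitting of sheaves into a topological splitting of Fr\'echet modules.
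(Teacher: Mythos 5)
Your proposal is correct and follows essentially the same route as the paper: apply $\Gamma(X,\,\cdot\,)$ to the sheaf resolution \eqref{For_res_sh} and identify the resulting modules as finitely generated, strictly projective. The paper simply delegates to Forster's results [For, 6.2 and 6.3] the facts you spell out (exactness via Theorem~B and the realization of each $\cP_i$ as a direct summand of $\cO_X^{k_i}$, hence of each $P_i$ as a retract of $\cO(X)^{k_i}$), so your write-up is just a more detailed version of the same argument.
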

\begin{proof}
Applying the global section functor $\Gamma(X,\,\cdot\,)$ to \eqref{For_res_sh},
we obtain~\eqref{For_res}. Using \cite[6.2 and 6.3]{For}, we conclude
that $P_0,\ldots ,P_m$ are finitely generated and strictly projective.
\end{proof}

\begin{definition}
Any sequence of the form \eqref{For_res_sh} (respectively, \eqref{For_res})
will be called a {\em Forster resolution} of $\cO_Y$ over $\cO_X$
(respectively, of $\cO(Y)$ over $\cO(X)$).
\end{definition}

\begin{remark}
\label{rem:sh-mod-res}
Since the global section functor is an equivalence between the category
of coherent $\cO_X$-modules and the category of Stein $\cO(X)$-modules,
it yields a 1-1 correspondence between Forster resolutions of
$\cO_Y$ over $\cO_X$ and Forster resolutions of $\cO(Y)$ over $\cO(X)$.
\end{remark}

\begin{remark}
In fact, it easily follows from Forster's construction \cite{For} that resolution
\eqref{For_res} can be chosen in such a way that
$P_0=\cO(X)$ and that the arrow $P_0\to \cO(Y)$ is the restriction map.
We will not use this in the sequel.
\end{remark}

\begin{remark}
Note that \eqref{For_res} is not necessarily
a resolution in the sense of Topological Homology, i.e., it need not be admissible.
The first counterexample was given in \cite[Proposition 5.3]{Mit_Hen};
a more general situation will be discussed in Remark~\ref{rem:nonsplit} below.
\end{remark}

\begin{prop}
\label{prop:Tor-mod-sh}
Let $X$ be a Stein manifold, and let $Y$ be a closed submanifold of $X$.
Then for each coherent $\cO_X$-module $\cF$ and each $p\in\Z_+$ we have
a topological isomorphism
\[
\Tor_p^{\cO(X)}(\cO(Y),\cF(X))\cong\Gamma(X,\cTor_p^{\cO_X}(\cO_Y,\cF)).
\]
\end{prop}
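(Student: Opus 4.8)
The plan is to resolve $\cO_Y$ globally by locally free sheaves, apply $\cF\tens{\cO_X}(-)$ followed by global sections, and check that this single complex simultaneously computes both sides. First I would fix a Forster resolution \eqref{For_res_sh} of $\cO_Y$,
\[
0\lar\cO_Y\lar\cP_0\lar\cdots\lar\cP_m\lar 0,
\]
with the $\cP_i$ locally free $\cO_X$-modules, together with its global-sections counterpart \eqref{For_res}, $0\lar\cO(Y)\lar P_0\lar\cdots\lar P_m\lar 0$, where $P_i=\cP_i(X)$ are finitely generated and strictly projective by Corollary~\ref{cor:For-res}. The complex $P_\bullet\to\cO(Y)\to 0$ is exact but need not be admissible, so it is not a projective resolution in the topological sense. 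Nonetheless each $P_i$ is flat (being strictly projective), and both $\cO(X)$ and $\cF(X)$ are nuclear.

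Consequently, by the computation of $\Tor$ via nonadmissible flat resolutions recalled in Section~\ref{sect:prelim} (from \cite{T1}, \cite[3.1.13]{Eschm_Put}), applied with the module $\cF(X)$ and the resolved object $\cO(Y)$, together with the symmetry of $\Tor$ over the commutative algebra $\cO(X)$, I obtain
\[
\Tor_p^{\cO(X)}(\cO(Y),\cF(X))\cong\Tor_p^{\cO(X)}(\cF(X),\cO(Y))\cong H_p\bigl(\cF(X)\ptens{\cO(X)}P_\bullet\bigr).
\]
The main work is then to identify this last complex sheaf-theoretically. For each $i$ I would establish a natural topological isomorphism $\cF(X)\ptens{\cO(X)}\cP_i(X)\cong\Gamma(X,\cF\tens{\cO_X}\cP_i)$. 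The continuous $\cO(X)$-balanced map $\cF(X)\times\cP_i(X)\to\Gamma(X,\cF\tens{\cO_X}\cP_i)$, $(s,t)\mapsto s\otimes t$, induces the comparison morphism, which is visibly an isomorphism when $\cP_i$ is free of finite rank. For a general locally free $\cP_i$ I would invoke Forster's equivalence: $\cP_i(X)$ is a direct summand of some $\cO(X)^{N}$, and since $\Gamma(X,-)$ is an equivalence onto Stein modules the splitting idempotent transfers, exhibiting $\cP_i$ as a direct summand of $\cO_X^{N}$; naturality then restricts the free-module isomorphism to $\cP_i$. These isomorphisms are compatible with the differentials, yielding an isomorphism of complexes $\cF(X)\ptens{\cO(X)}P_\bullet\cong\Gamma(X,\cF\tens{\cO_X}\cP_\bullet)$. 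I expect this commutation of the completed $\cO(X)$-module tensor product with the global section functor --- and in particular the verification that it is genuinely a \emph{topological} isomorphism --- to be the main obstacle.

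Finally I would identify the homology. Since each $\cP_i$ is locally free it is $\cO_X$-flat, so $\cP_\bullet$ is a flat resolution of $\cO_Y$ and the homology sheaves of $\cF\tens{\cO_X}\cP_\bullet$ are exactly $\cTor_p^{\cO_X}(\cO_Y,\cF)$. Because $X$ is Stein, Cartan's Theorem~B makes $\Gamma(X,-)$ exact on coherent sheaves; applying it to the short exact sequences $0\to\cIm\to\Ker\to\cTor_p^{\cO_X}(\cO_Y,\cF)\to 0$ extracted from the complex of coherent sheaves $\cF\tens{\cO_X}\cP_\bullet$ shows that $\Gamma$ commutes with passage to homology, giving $H_p\bigl(\Gamma(X,\cF\tens{\cO_X}\cP_\bullet)\bigr)\cong\Gamma(X,\cTor_p^{\cO_X}(\cO_Y,\cF))$. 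Chaining the three displays yields the asserted isomorphism. Throughout, every map is a morphism of Fr\'echet $\cO(X)$-modules whose target is a Stein module, hence Hausdorff, so the open mapping theorem upgrades each algebraic isomorphism to a topological one.
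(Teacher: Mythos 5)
Your proposal follows the same route as the paper: take a Forster resolution, use nuclearity to compute $\Tor$ from this nonadmissible flat resolution, identify the tensored complex with $\Gamma(X,\cF\tens{\cO_X}\cP_\bullet)$, and commute $\Gamma$ with homology via Theorem~B. The only difference is that you prove by hand (reduction to the finite free case) the commutation $\cF(X)\ptens{\cO(X)}\cP_i(X)\cong\Gamma(X,\cF\tens{\cO_X}\cP_i)$, which the paper simply cites from Eschmeier--Putinar and \cite{Pir_Stein}; the argument is correct.
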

\begin{proof}
Let $\cP\to\cO_Y\to 0$ be a Forster resolution of $\cO_Y$ over $\cO_X$.
Applying the global section functor, we get a Forster resolution
$\cP(X)\to\cO(Y)\to 0$ of $\cO(Y)$ over $\cO(X)$ (see Remark \ref{rem:sh-mod-res}).
Since all the modules $\cP_i(X)$ are nuclear, it follows that
\[
\Tor_p^{\cO(X)}(\cO(Y),\cF(X))\cong H_p(\cP(X)\ptens{\cO(X)} \cF(X)).
\]
We also have $\cP(X)\ptens{\cO(X)} \cF(X)\cong \Gamma(X,\cP\tens{\cO_X}\cF)$
(see, e.g., \cite[4.2.4]{Eschm_Put} or \cite[2.2]{Pir_Stein}). Hence
\[
\begin{split}
\Tor_p^{\cO(X)}(\cO(Y),\cF(X))
&\cong H_p(\Gamma(X,\cP\tens{\cO_X}\cF))\\
&\cong\Gamma(X,H_p(\cP\tens{\cO_X}\cF))
=\Gamma(X,\cTor_p^{\cO_X}(\cO_Y,\cF)).\qedhere
\end{split}
\]
\end{proof}

\begin{corollary}
\label{cor:HKR_Tor}
Let $X$ be a Stein manifold, and let $Y$ be a closed submanifold of $X$.
Denote by $\cI\subset\cO_X$ the ideal sheaf of $Y$. Then for each $p\in\Z_+$
we have
\[
\Tor_p^{\cO(X)}(\cO(Y),\cO(Y))\cong\Gamma(X,{\textstyle\bigwedge}^p (\cI/\cI^2)).
\]
\end{corollary}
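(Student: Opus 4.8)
The plan is to derive the corollary from Proposition~\ref{prop:Tor-mod-sh} together with a local computation of a sheaf Tor. Taking $\cF=\cO_Y$ in that proposition (which is legitimate, since $\cO_Y$ is a coherent $\cO_X$-module with $\cO_Y(X)=\cO(Y)$), we obtain at once a topological isomorphism
\[
\Tor_p^{\cO(X)}(\cO(Y),\cO(Y))\cong\Gamma(X,\cTor_p^{\cO_X}(\cO_Y,\cO_Y)).
\]
Thus the task reduces to producing a canonical isomorphism of coherent $\cO_X$-modules
\[
\cTor_p^{\cO_X}(\cO_Y,\cO_Y)\cong{\textstyle\bigwedge}^p(\cI/\cI^2),
\]
where the exterior power is formed over $\cO_Y=\cO_X/\cI$ (the module $\cI/\cI^2$ being annihilated by $\cI$). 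This is a purely local statement, so I would check it stalkwise.

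For the local computation, fix $x\in X$. If $x\notin Y$ then $\cO_{Y,x}=0$ and both sides vanish, so assume $x\in Y$. Choosing local coordinates $z^1,\dots,z^n$ near $x$ with $Y$ cut out by $z^1=\cdots=z^m=0$, the ideal $\cI_x$ is generated by the regular sequence $(z^1,\dots,z^m)$ in the local ring $\cO_{X,x}$, exactly as in the proof of the Lemma above. The associated Koszul complex is then a free resolution of $\cO_{Y,x}=\cO_{X,x}/\cI_x$, and I would compute $\Tor_\bullet^{\cO_{X,x}}(\cO_{Y,x},\cO_{Y,x})$ by tensoring this resolution with $\cO_{Y,x}$ over $\cO_{X,x}$. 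The key observation is that each Koszul differential is given by multiplication by the $z^i$, which lie in $\cI_x$ and hence act as $0$ on $\cO_{Y,x}$; so after tensoring all differentials vanish, and the degree-$p$ homology is the free $\cO_{Y,x}$-module $\bigwedge^p(\cO_{Y,x})^m$. Since $(z^1,\dots,z^m)$ is regular, the conormal module $\cI_x/\cI_x^2$ is free over $\cO_{Y,x}$ on the classes of the $z^i$, whence $\cI_x/\cI_x^2\cong(\cO_{Y,x})^m$ and $\bigwedge^p(\cO_{Y,x})^m\cong\bigwedge^p(\cI_x/\cI_x^2)$.

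To assemble these stalk isomorphisms into a genuine isomorphism of sheaves, I would not glue the coordinate-dependent Koszul computations directly, but instead construct one globally defined comparison map and verify that it realizes the above isomorphisms on stalks. Recall that $\cTor_\bullet^{\cO_X}(\cO_Y,\cO_Y)$ is a graded-commutative $\cO_Y$-algebra and that there is a canonical identification $\cTor_1^{\cO_X}(\cO_Y,\cO_Y)\cong\cI/\cI^2$. By graded-commutativity, the degree-$1$ part extends to a canonical morphism of $\cO_Y$-algebras
\[
{\textstyle\bigwedge}^\bullet(\cI/\cI^2)\lra\cTor_\bullet^{\cO_X}(\cO_Y,\cO_Y),
\]
and the local computation shows that it is an isomorphism in every degree.

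I expect the main obstacle to be not the Koszul bookkeeping---routine once a regular sequence is available---but the canonicity needed for gluing: the naive stalkwise identification depends on the chosen coordinates, and one must check that the globally defined map from $\bigwedge^\bullet(\cI/\cI^2)$ is the correct one. Using the graded-algebra structure on Tor together with the coordinate-free identification $\cTor_1\cong\cI/\cI^2$ is precisely what makes this transparent and delivers an isomorphism of coherent $\cO_X$-modules, which combined with the first display proves the corollary.
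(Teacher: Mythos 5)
Your argument is correct and takes essentially the same route as the paper: apply Proposition~\ref{prop:Tor-mod-sh} with $\cF=\cO_Y$ and combine it with the sheaf-level isomorphism $\cTor_p^{\cO_X}(\cO_Y,\cO_Y)\cong{\textstyle\bigwedge}^p(\cI/\cI^2)$. The only difference is that the paper cites this last fact from Manin's lectures (remarking that the scheme-theoretic proof carries over to complex manifolds unchanged), whereas you supply that proof yourself via the Koszul complex, correctly handling the globalization through the graded-commutative algebra structure on $\cTor_\bullet$ and the canonical identification $\cTor_1\cong\cI/\cI^2$.
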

\begin{proof}
By \cite[3.5]{Manin_K}, we have $\cTor_p^{\cO_X}(\cO_Y,\cO_Y)
\cong\bigwedge^p (\cI/\cI^2)$ (the proof of this fact, given in
\cite{Manin_K} for regular schemes, applies to complex manifolds without changes).
Now it remains to apply Proposition~\ref{prop:Tor-mod-sh}.
\end{proof}

As a byproduct, we obtain the following analytic version of the
Hochschild--Kostant--Rosenberg Theorem~\cite{HKR}.

\begin{corollary}
\label{cor:HKR}
Let $X$ be a Stein manifold. Then for each $p\in\Z_+$ we have
\[
\cH_p(\cO(X),\cO(X))\cong\Omega^p(X),
\]
where $\Omega^p(X)$ is the space of holomorphic $p$-forms on $X$.
\end{corollary}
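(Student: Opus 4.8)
The plan is to realize the Hochschild homology $\cH_p(\cO(X),\cO(X))$ as a $\Tor$ over the enveloping algebra and then recognize it as the left-hand side of Corollary~\ref{cor:HKR_Tor} applied to the diagonal. Write $A=\cO(X)$. Since $A$ is commutative, $A^{\op}=A$, so the enveloping algebra is $A^e=A\Ptens A$. Because $\cO(X)$ is nuclear, the standard topological tensor-product identity $\cO(X)\Ptens\cO(X)\cong\cO(X\times X)$ holds, and $X\times X$ is again a Stein manifold; thus $A^e\cong\cO(X\times X)$ as Fr\'echet algebras. By the very definition of Hochschild homology, $\cH_p(\cO(X),\cO(X))=\Tor_p^{A^e}(A,A)$.

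The next step is to identify the $A^e$-module $A$ with the module of functions on the diagonal. Let $\Delta\subset X\times X$ be the diagonal; it is a closed submanifold, the map $x\mapsto(x,x)$ identifies it with $X$, and $\codim_{X\times X}\Delta=\dim X$. Under $A^e\cong\cO(X\times X)$ the elementary tensor $a\otimes b$ corresponds to the function $(x,y)\mapsto a(x)b(y)$, and one checks that the outer $A^e$-action on $A$ — where $a\otimes b$ sends $c$ to $acb$ — goes over to the restriction-to-diagonal action of $\cO(X\times X)$ on $\cO(\Delta)$. Hence $A\cong\cO(\Delta)$ as $\cO(X\times X)$-modules, and therefore
\[
\cH_p(\cO(X),\cO(X))\cong\Tor_p^{\cO(X\times X)}(\cO(\Delta),\cO(\Delta)).
\]

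Now I would apply Corollary~\ref{cor:HKR_Tor} to the Stein manifold $X\times X$ and the closed submanifold $\Delta$. Writing $\cI\subset\cO_{X\times X}$ for the ideal sheaf of $\Delta$, it yields
\[
\Tor_p^{\cO(X\times X)}(\cO(\Delta),\cO(\Delta))\cong\Gamma\bigl(X\times X,{\textstyle\bigwedge}^p(\cI/\cI^2)\bigr).
\]
It remains to identify the conormal sheaf. The sheaf $\cI/\cI^2$ is an $\cO_\Delta$-module, and the standard isomorphism between the conormal sheaf of the diagonal and the cotangent sheaf gives $\cI/\cI^2\cong\Omega^1_X$ under $\Delta\cong X$; hence $\bigwedge^p(\cI/\cI^2)\cong\Omega^p_X$. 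Since this sheaf is supported on $\Delta$, its global sections over $X\times X$ coincide with $\Gamma(X,\Omega^p_X)=\Omega^p(X)$, which completes the argument.

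The substantive points, rather than genuine obstacles, are the two structural identifications feeding into Corollary~\ref{cor:HKR_Tor}: the topological isomorphism $A^e\cong\cO(X\times X)$, which relies on the nuclearity of $\cO(X)$, together with the compatibility of the bimodule structure with restriction to $\Delta$; and the geometric identification $\cI/\cI^2\cong\Omega^1_X$ of the conormal sheaf of the diagonal with the holomorphic cotangent sheaf. Once these are in place the result is immediate from Corollary~\ref{cor:HKR_Tor}.
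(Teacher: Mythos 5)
Your proof is correct and follows essentially the same route as the paper: identify $\cO(X)^e$ with $\cO(X\times X)$ via nuclearity, recognize the bimodule $\cO(X)$ as $\cO(\Delta)$, apply Corollary~\ref{cor:HKR_Tor} to the diagonal, and identify the conormal sheaf $\cI/\cI^2$ with $\Omega^1_X$. No substantive differences.
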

\begin{proof}
Recall from \cite[II.3.3]{Groth} that there exists a topological isomorphism
\[
\cO(X)^e=\cO(X)\Ptens\cO(X)\cong\cO(X\times X),\quad
f\otimes g\mapsto ((x,y)\mapsto f(x)g(y)).
\]
Under this identification, the $\cO(X)^e$-module $\cO(X)$ becomes the
$\cO(X\times X)$-module $\cO(\Delta)$, where $\Delta=\{ (x,x) : x\in X\}$ is the
diagonal of $X\times X$. Let $\cI\subset\cO_{X\times X}$
be the ideal sheaf of $\Delta$. Identifying $X$ with $\Delta$ via the map
$x\mapsto (x,x)$ induces a sheaf isomorphism between
$(\cI/\cI^2)|_\Delta$ and the cotangent sheaf $\Omega_X^1$ (see, e.g.,
\cite{Groth_SHC}). Now Corollary~\ref{cor:HKR_Tor} implies that
\[
\begin{split}
\cH_p(\cO(X),\cO(X))=\Tor_p^{\cO(X)^e}(\cO(X),\cO(X))
&\cong\Tor_p^{\cO(X\times X)}(\cO(\Delta),\cO(\Delta))\\
&\cong\Gamma(X\times X,{\textstyle\bigwedge}^p (\cI/\cI^2))
\cong \Omega^p(X).\qedhere
\end{split}
\]
\end{proof}

\begin{theorem}
\label{thm:wdh}
Let $X$ be a Stein manifold, and let $Y$ be a closed submanifold of $X$.
Then $\wdh_{\cO(X)}\cO(Y)=\codim_X Y$.
\end{theorem}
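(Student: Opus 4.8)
The plan is to establish the two inequalities $\wdh_{\cO(X)}\cO(Y)\le m$ and $\wdh_{\cO(X)}\cO(Y)\ge m$ separately, where $m=\codim_X Y$. The upper bound will come from the Forster resolution of Corollary~\ref{cor:For-res}, and the lower bound from the explicit $\Tor$ computation of Corollary~\ref{cor:HKR_Tor}.

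For the upper bound I would take a Forster resolution $0\lar\cO(Y)\lar P_0\lar\cdots\lar P_m\lar 0$ as in Corollary~\ref{cor:For-res}. Each $P_i$ is finitely generated and strictly projective, hence flat, and, being a Stein module over the nuclear algebra $\cO(X)$, is nuclear. The obstacle is that this resolution need not be admissible, so it is not a flat resolution in the sense of Topological Homology and cannot be used naively to bound $\wdh$. I would get around this exactly as in the proof of Proposition~\ref{prop:Tor-mod-sh}: since the $P_i$ are flat and nuclear, the computation of $\Tor$ from nonadmissible resolutions recalled in Section~\ref{sect:prelim} gives a topological isomorphism $\Tor_n^{\cO(X)}(N,\cO(Y))\cong H_n(N\ptens{\cO(X)}P_\bullet)$ for every $N\in\rmod\cO(X)$. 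As $P_i=0$ for $i>m$, this yields $\Tor_{m+1}^{\cO(X)}(N,\cO(Y))=0$ and identifies $\Tor_m^{\cO(X)}(N,\cO(Y))$ with the kernel of the continuous map $N\ptens{\cO(X)}P_m\to N\ptens{\cO(X)}P_{m-1}$. Being a closed subspace of a Fr\'echet space, this kernel is Hausdorff, so both requirements in the definition of $\wdh$ hold and $\wdh_{\cO(X)}\cO(Y)\le m$.

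For the lower bound I would test against $N=\cO(Y)$ and apply Corollary~\ref{cor:HKR_Tor}, which gives $\Tor_m^{\cO(X)}(\cO(Y),\cO(Y))\cong\Gamma(X,\bigwedge^m(\cI/\cI^2))$, where $\cI$ is the ideal sheaf of $Y$. Since $Y$ has codimension $m$, the conormal sheaf $\cI/\cI^2$ is locally free of rank $m$ over $\cO_Y$ (in local coordinates with $Y=\{z^1=\cdots=z^m=0\}$ the classes of $z^1,\ldots,z^m$ form a free basis), so $\bigwedge^m(\cI/\cI^2)$ is a holomorphic line bundle on $Y$. The decisive point is that a closed submanifold of a Stein manifold is again Stein, so $Y$ is Stein; by Cartan's Theorem~A this nonzero line bundle is generated by its global sections, whence $\Gamma(X,\bigwedge^m(\cI/\cI^2))\ne 0$ and $\Tor_m^{\cO(X)}(\cO(Y),\cO(Y))\ne 0$. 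This excludes $\wdh_{\cO(X)}\cO(Y)\le m-1$ and yields $\wdh_{\cO(X)}\cO(Y)\ge m$, completing the argument. I expect the upper bound to be the delicate step: the Forster resolution is generally non-admissible, so everything hinges on the nuclearity of $\cO(X)$ to compute $\Tor$ from it, and on the fact that the top $\Tor$, being a kernel, is automatically Hausdorff---precisely what the weak-dimension definition needs and what will fail in the projective setting of the later sections.
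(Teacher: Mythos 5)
Your proposal is correct and follows essentially the same route as the paper: the upper bound via the Forster resolution combined with the nuclearity argument for computing $\Tor$ from a nonadmissible flat resolution (with $\Tor_m$ Hausdorff because it is the kernel of a continuous map between Fr\'echet spaces), and the lower bound via Corollary~\ref{cor:HKR_Tor} together with the fact that $\cI/\cI^2$ is locally free of rank $m$. Your invocation of Cartan's Theorem~A to see that $\Gamma(X,\bigwedge^m(\cI/\cI^2))\ne 0$ merely makes explicit a step the paper leaves implicit.
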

\begin{proof}
Let $M$ be a Fr\'echet $\cO(X)$-module, and let $P\to\cO(Y)\to 0$ be
a Forster resolution of $\cO(Y)$ over $\cO(X)$. Using the nuclearity argument
(see the proof of Proposition~\ref{prop:Tor-mod-sh}), we see that
$\Tor_p^{\cO(X)}(\cO(Y),M)$ is topologically isomorphic to $H_p(P\ptens{\cO(X)}M)$,
which vanishes for $p>m$ and is Hausdorff for $p=m$, where $m=\codim_X Y$.
Therefore $\wdh_{\cO(X)}\cO(Y)\le m$. To obtain the opposite estimate,
recall from \cite[3.4]{Manin_K} that $(\cI/\cI^2)|_Y$ is locally free of rank $m$.
In particular, $\bigwedge^m (\cI/\cI^2)\ne 0$, and it follows from
Corollary~\ref{cor:HKR_Tor} that $\Tor_m^{\cO(X)}(\cO(Y),\cO(Y))\ne 0$.
The rest is clear.
\end{proof}

\begin{corollary}[\cite{Put_sect,Pir_Stein}]
\label{cor:wdg}
Let $X$ be a Stein manifold. Then $\wdg\cO(X)=\wdb\cO(X)=\dim X$.
\end{corollary}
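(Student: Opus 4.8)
The plan is to derive both equalities directly from Theorem~\ref{thm:wdh} by choosing two suitable submanifolds, together with the general inequality $\wdg\cO(X)\le\wdb\cO(X)$ recorded in Section~\ref{sect:prelim}. The substantive content has already been established; what remains is to feed the right geometric input into that theorem.

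First I would compute the weak bidimension. By definition $\wdb\cO(X)=\wdh_{\cO(X)^e}\cO(X)$, where $\cO(X)^e=\cO(X)\Ptens\cO(X)^{\op}$. Since $\cO(X)$ is commutative we have $\cO(X)^{\op}=\cO(X)$, and the Grothendieck isomorphism $\cO(X)^e\cong\cO(X\times X)$ used in the proof of Corollary~\ref{cor:HKR} identifies the $\cO(X)^e$-module $\cO(X)$ with the $\cO(X\times X)$-module $\cO(\Delta)$, where $\Delta\subset X\times X$ is the diagonal. As $X\times X$ is again a Stein manifold and $\Delta$ is a closed submanifold with $\codim_{X\times X}\Delta=\dim X$, Theorem~\ref{thm:wdh} applied to $X\times X$ and $\Delta$ yields
\[
\wdb\cO(X)=\wdh_{\cO(X\times X)}\cO(\Delta)=\codim_{X\times X}\Delta=\dim X.
\]

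Next I would obtain the matching lower bound for the weak global dimension. Taking $Y=\{x_0\}$ to be a single point of $X$, viewed as a closed submanifold of codimension $\dim X$, Theorem~\ref{thm:wdh} gives $\wdh_{\cO(X)}\CC=\wdh_{\cO(X)}\cO(Y)=\dim X$, where $\CC=\cO(Y)$ carries the evaluation module structure. Since $\wdg\cO(X)$ is by definition the supremum of $\wdh_{\cO(X)}M$ over all $M\in\cO(X)\lmod$, this forces $\wdg\cO(X)\ge\dim X$.

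Finally I would assemble the chain of inequalities
\[
\dim X=\wdh_{\cO(X)}\CC\le\wdg\cO(X)\le\wdb\cO(X)=\dim X,
\]
which collapses to equality throughout and proves both assertions. I do not expect any genuine obstacle here: the only points meriting a word of justification are that a product of Stein manifolds is Stein, that the diagonal has codimension $\dim X$, and that the bimodule $\cO(X)$ corresponds to $\cO(\Delta)$ under $\cO(X)^e\cong\cO(X\times X)$ — and the last of these has already been verified in the course of proving Corollary~\ref{cor:HKR}. Thus the entire argument is bookkeeping around Theorem~\ref{thm:wdh}, and the only mild subtlety is checking that the evaluation-at-$x_0$ module is indeed the Stein module $\cO(\{x_0\})$ so that the theorem applies.
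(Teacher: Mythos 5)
Your proposal is correct and follows essentially the same route as the paper: identify $\cO(X)^e$ with $\cO(X\times X)$ and apply Theorem~\ref{thm:wdh} to the diagonal to get $\wdb\cO(X)=\dim X$, then apply the same theorem to a singleton and use $\wdg\cO(X)\le\wdb\cO(X)$ to squeeze the weak global dimension. No gaps.
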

\begin{proof}
As in the proof of Corollary \ref{cor:HKR}, identify $X$ with the
diagonal $\Delta\subset X\times X$.
Theorem~\ref{thm:wdh} implies that
\[
\wdb\cO(X)=\wdh\nolimits_{\cO(X)^e}\cO(X)=\wdh\nolimits_{\cO(X\times X)}\cO(\Delta)
=\codim_{X\times X}\Delta=\dim X.
\]
Hence $\wdg\cO(X)\le\dim X$. Applying Theorem~\ref{thm:wdh} to the
singleton $Y=\{ x_0\}$ yields the opposite estimate $\wdg\cO(X)\ge\codim_X \{ x_0\}=\dim X$.
\end{proof}

\section{Liouville-type property and projective dimension}
\label{sect:Liouv}

Let $X$ be a complex manifold. Following \cite{Palam}, we say that
$X$ is of {\em Liouville type} if each bounded above plurisubharmonic function
on $X$ is constant. For example, each nonsingular affine algebraic
variety is of Liouville type. More examples can be found in
\cite{Aytuna_survey,Aytuna_Sad} (note that Stein manifolds of Liouville type are
called {\em parabolic} in \cite{Aytuna_Sad}).

Starting from this section, we will make use of the linear topological invariants
$(DN)$ and $(\Omega)$ introduced by D.~Vogt~\cite{Vogt_sub,VW_quot}
(see also \cite{MV}). Let $E$ be a Fr\'echet space.
By definition, $E$ has property $(DN)$ if the topology on $E$ can be determined
by an increasing sequence $\{ \|\cdot\|_n : n\in\N\}$ of seminorms satisfying the
following condition: there exists $p\in\N$ such that for each $k\in\N$ and each
$0<r<1$ there exist $n\in\N$ and $C>0$ such that
\[
\| x\|_k\le C\| x\|_p^r \| x\|_n^{1-r}\qquad (x\in E).
\]

Given a continuous seminorm $\|\cdot\|$ on a Fr\'echet space $E$,
define the dual ``seminorm''
$\|\cdot\|^*\colon E^*\to [0,+\infty]$ by $\| y\|^*=\sup\{ |y(x)| : \| x\|\le 1\}$.
Note that $\|\cdot\|^*$ can take the value $+\infty$ as well.
By definition, $E$ has property $(\Omega)$ if the topology on $E$ can be determined
by an increasing sequence $\{ \|\cdot\|_n : n\in\N\}$ of seminorms satisfying the
following condition: for each $p\in\N$ there exists $q\in\N$ such that for each
$k\in\N$ there exist $C>0$ and $r\in (0,1)$ satisfying
\[
\| y\|_q^* \le C (\| y\|_p^*)^r (\| y\|_k^*)^{1-r}\qquad (y\in E^*).
\]

Property $(DN)$ is inherited by subspaces, while $(\Omega)$ is inherited by
quotients.
A basic fact about $(DN)$ and $(\Omega)$ is the following Splitting
Theorem due to Vogt and Wagner \cite{VW_quot} (see also \cite{MV}):
an exact sequence $0\to E\to F\to G\to 0$ of nuclear Fr\'echet spaces
splits provided that $E$ has $(\Omega)$ and
$G$ has $(DN)$.

The following lemma is an easy consequence of the Splitting Theorem.

\begin{lemma}
\label{lemma:split}
Let $E=(E_i,d_i)$ be a chain complex
of nuclear Fr\'echet spaces. Suppose that all the spaces $E_i$
have properties $(DN)$ and $(\Omega)$. Then $E$ splits.
\end{lemma}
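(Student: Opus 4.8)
The plan is to split the complex by applying the Vogt--Wagner Splitting Theorem to the short exact sequences extracted from it. Since a contracting homotopy of continuous linear maps can exist only for an exact complex, I shall use that $E$ is acyclic. Writing $Z_i=\Ker d_i$, exactness gives $\Im d_i=Z_{i-1}$, so that $E$ breaks up into the short exact sequences
\[
0\lra Z_i\lra E_i\xra{d_i} Z_{i-1}\lra 0
\]
of nuclear Fr\'echet spaces, and it suffices to split each of these and then reassemble the splittings into a global contracting homotopy.

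The crucial point, and the place where both properties of \emph{every} $E_i$ get used, is that each cycle space $Z_i$ carries two descriptions at once. As $Z_i=\Ker d_i$ it is a \emph{subspace} of $E_i$, so it inherits $(DN)$ from $E_i$; in particular the quotient term $Z_{i-1}\subseteq E_{i-1}$ of the sequence above has $(DN)$. On the other hand, exactness and the first isomorphism theorem give $Z_i=\Im d_{i+1}\cong E_{i+1}/Z_{i+1}$, exhibiting $Z_i$ as a \emph{quotient} of $E_{i+1}$, so it inherits $(\Omega)$ from $E_{i+1}$. Thus in $0\to Z_i\to E_i\to Z_{i-1}\to 0$ the subspace $Z_i$ has $(\Omega)$ and the quotient $Z_{i-1}$ has $(DN)$; since all three spaces are nuclear Fr\'echet, the Splitting Theorem applies and the sequence splits.

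To finish I would choose, for each $i$, a continuous projection $p_i\colon E_i\to Z_i$ coming from such a splitting (equivalently a continuous section of $d_i\colon E_i\to Z_{i-1}$). These yield topological decompositions $E_i\cong Z_i\oplus Z_{i-1}$ under which $d_i$ is the composite of the projection onto the second summand with the inclusion $Z_{i-1}\hookrightarrow E_{i-1}$. From the induced isomorphisms it is then routine to define continuous maps $h_i\colon E_i\to E_{i+1}$ satisfying $d_{i+1}h_i+h_{i-1}d_i=\mathrm{id}_{E_i}$, which is the desired splitting of $E$.

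I expect the main obstacle to be exactly this matching of invariants in the key step. The Splitting Theorem requires the subspace to satisfy $(\Omega)$ and the quotient to satisfy $(DN)$, whereas the inheritance rules give $(DN)$ to subspaces and $(\Omega)$ to quotients---the opposite pairing. What rescues the argument is the double role of $Z_i$ as both a kernel inside $E_i$ and, by exactness, an image and hence a quotient of $E_{i+1}$; this is precisely why one must assume that every term of the complex, not merely alternate terms, enjoys both $(DN)$ and $(\Omega)$. Once this matching is in place, splitting the sequences and assembling the homotopy are standard.
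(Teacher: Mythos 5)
Your proof is correct and follows essentially the same route as the paper's: extract the short exact sequences $0\to Z_i\to E_i\xra{d_i} Z_{i-1}\to 0$, use exactness so that each cycle space inherits $(DN)$ as a subspace of $E_i$ and $(\Omega)$ as a quotient of $E_{i+1}$, and apply the Vogt--Wagner Splitting Theorem. The only cosmetic difference is that the paper writes the sequences in terms of the images $K_i=\Im d_i$ rather than the kernels.
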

\begin{proof}
For each $i\in\Z$ we have an exact sequence
\begin{equation}
\label{short_exact}
0 \lar K_i \xla{d_i} E_{i+1} \lar K_{i+1}\lar 0,
\end{equation}
where $K_i=\Im (d_i\colon E_{i+1}\to E_i)$.
Note that each $K_i$, being a subspace of $E_i$ and a
quotient of $E_{i+1}$, has properties $(DN)$ and $(\Omega)$.
Now the Splitting Theorem implies that \eqref{short_exact} splits for all $i\in\Z$,
which means exactly that $E$ splits.
\end{proof}

Let now $X$ be a Stein manifold. As was observed in \cite{VW2},
$\cO(X)$ always has property $(\Omega)$. Indeed, for $X=\CC^n$ this can be
seen directly, and in the general case $X$ can be embedded into $\CC^n$
for sufficiently large $n$, so $\cO(X)$ becomes a quotient of $\cO(\CC^n)$.
On the other hand, it was shown independently by Zakharyuta~\cite{Zah_iso},
Vogt~\cite{Vogt_some_results}, and Aytuna~\cite{Aytuna_sp_anal} that
$\cO(X)$ has $(DN)$ if and only if $X$ is of Liouville type.
In particular, this is true provided that $X$ is affine algebraic \cite{Zah_alg}.
For more results in this direction, see \cite{Aytuna_survey,Aytuna_Sad,Palam}.

\begin{corollary}
\label{cor:For_adm}
Let $X$ be a Stein manifold, and let $Y$ be a closed submanifold of $X$.
Suppose that both $X$ and $Y$ are of Liouville type.
Then each Forster resolution of $\cO(Y)$ over $\cO(X)$ is admissible.
\end{corollary}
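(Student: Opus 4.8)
The plan is to recognize that ``admissible'' means exactly ``split in the category of topological vector spaces,'' and to obtain this splitting directly from Lemma~\ref{lemma:split}. Let
\[
0\lar \cO(Y) \lar P_0 \lar P_1 \lar \cdots \lar P_m \lar 0
\]
be a Forster resolution; by Corollary~\ref{cor:For-res} this is an exact sequence of Fr\'echet $\cO(X)$-modules, and each term is a nuclear Fr\'echet space, being the module of global sections of a coherent sheaf over a Stein manifold. Viewing it as a bounded exact chain complex of nuclear Fr\'echet spaces, Lemma~\ref{lemma:split} tells us that it splits as soon as every term has both $(DN)$ and $(\Omega)$. So the whole argument reduces to a termwise verification of these two linear topological properties.

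For the leftmost term I would argue as follows. Since $Y$ is Stein, $\cO(Y)$ has $(\Omega)$; and since $Y$ is of Liouville type, the characterization of Zakharyuta, Vogt, and Aytuna recalled above shows that $\cO(Y)$ has $(DN)$. This is the only place the hypothesis on $Y$ is used, and it is essential: it is precisely the property that can fail once $Y$ is merely hyperconvex rather than parabolic, which is what will be exploited in the opposite direction in Section~\ref{sect:hyper}.

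For each $P_i$ I would use that it is finitely generated and strictly projective (Corollary~\ref{cor:For-res}), hence a topological direct summand of a finite free module $\cO(X)^{N_i}$. Because $X$ is Stein, $\cO(X)$ has $(\Omega)$, and because $X$ is of Liouville type, $\cO(X)$ has $(DN)$; both properties pass to the finite product $\cO(X)^{N_i}$. Finally, $P_i$ is simultaneously a subspace of $\cO(X)^{N_i}$, so it inherits $(DN)$ (stable under subspaces), and a quotient of $\cO(X)^{N_i}$, so it inherits $(\Omega)$ (stable under quotients). Thus every $P_i$ has both $(DN)$ and $(\Omega)$.

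Having checked $(DN)$ and $(\Omega)$ for all terms, Lemma~\ref{lemma:split} immediately yields the splitting, that is, admissibility. I expect the only delicate point to be the representation of each $P_i$ as a complemented subspace of a finite free module: it is precisely this ``subspace and quotient'' description that transports $(DN)$ and $(\Omega)$ from $\cO(X)$, and it also clarifies why \emph{both} Liouville-type assumptions are needed. The Liouville property of $X$ feeds $(DN)$ into every $P_i$, while the Liouville property of $Y$ feeds $(DN)$ into $\cO(Y)$, and having $(DN)$ at every term is exactly what the Splitting Theorem cannot do without.
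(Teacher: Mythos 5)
Your proposal is correct and follows essentially the same route as the paper: verify that $\cO(Y)$ and each $P_i$ (the latter as a complemented subspace of a finite free module $\cO(X)^{p}$) have both $(DN)$ and $(\Omega)$, then invoke Lemma~\ref{lemma:split}. The paper's proof is just a more compressed version of exactly this argument.
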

\begin{proof}
Since $X$ and $Y$ are of Liouville type, it follows that
$\cO(X)$ and $\cO(Y)$ have properties $(DN)$ and $(\Omega)$.
Each finitely generated, strictly projective Fr\'echet $\cO(X)$-module
also has properties $(DN)$ and $(\Omega)$, because it is isomorphic to
a direct summand of $\cO(X)^p$ for some $p\in\N$.
Now we see that resolution \eqref{For_res} satisfies the
conditions of Lemma~\ref{lemma:split}.
\end{proof}

\begin{theorem}
\label{thm:dh_Liouv}
Let $X$ be a Stein manifold, and let $Y$ be a closed submanifold of $X$.
Suppose that both $X$ and $Y$ are of Liouville type.
Then $\dh_{\cO(X)}\cO(Y)=\codim_X Y$.
\end{theorem}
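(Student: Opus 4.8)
The plan is to sandwich $\dh_{\cO(X)}\cO(Y)$ between two bounds, both of which are now within easy reach of the preceding results. Write $m=\codim_X Y$. For the upper bound, I would invoke the Forster resolution
\[
0\lar \cO(Y) \lar P_0 \lar P_1 \lar \cdots \lar P_m \lar 0
\]
from Corollary~\ref{cor:For-res}, whose terms $P_0,\ldots,P_m$ are finitely generated and strictly projective, hence projective. Under the Liouville-type hypothesis on $X$ and $Y$, Corollary~\ref{cor:For_adm} guarantees that this sequence is admissible. Thus it is genuinely a projective resolution of $\cO(Y)$ in the sense of Topological Homology, and its length is $m$. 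This immediately yields $\dh_{\cO(X)}\cO(Y)\le m$.

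For the lower bound, I would appeal to the general inequality $\wdh_A M\le\dh_A M$, valid for any Fr\'echet algebra $A$ and any $M\in A\lmod$ because every projective module is flat. Combined with Theorem~\ref{thm:wdh}, which gives $\wdh_{\cO(X)}\cO(Y)=m$ with no extra assumptions on $X$ and $Y$, this forces $\dh_{\cO(X)}\cO(Y)\ge m$. Putting the two bounds together gives the desired equality $\dh_{\cO(X)}\cO(Y)=m=\codim_X Y$.

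The proof itself is short, so the real work has already been absorbed into the earlier statements. The genuine obstacle---and the only place where the Liouville-type hypothesis is used---is the admissibility of the Forster resolution in Corollary~\ref{cor:For_adm}, which in turn rests on the Vogt--Wagner Splitting Theorem together with the fact that $\cO(X)$ has property $(DN)$ precisely when $X$ is of Liouville type. It is worth emphasizing that the lower bound comes for free from the weak-dimension computation and needs no such hypothesis; the subtlety of Liouville type enters only through the upper bound, where it is exactly what allows one to pass from the abstract exactness of the Forster complex to the topological splitting required for it to count as a projective resolution.
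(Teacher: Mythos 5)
Your proposal is correct and follows exactly the paper's own argument: the upper bound $\dh_{\cO(X)}\cO(Y)\le\codim_X Y$ comes from the admissible Forster resolution of Corollary~\ref{cor:For_adm}, and the lower bound from Theorem~\ref{thm:wdh} via the inequality $\wdh_A M\le\dh_A M$. Your additional remarks correctly identify where the Liouville-type hypothesis enters; nothing further is needed.
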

\begin{proof}
Corollary~\ref{cor:For_adm} implies that $\dh_{\cO(X)}\cO(Y)\le\codim_X Y$,
and the opposite estimate is immediate from Theorem~\ref{thm:wdh}.
\end{proof}

Arguing in the same way as in the proof of Corollary~\ref{cor:wdg}, we obtain the
following.

\begin{corollary}
\label{cor:dgdb}
Let $X$ be a Stein manifold of Liouville type.
Then $\dg\cO(X)=\db\cO(X)=\dim X$.
\end{corollary}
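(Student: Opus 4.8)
The plan is to follow the proof of Corollary~\ref{cor:wdg} verbatim, replacing every appeal to Theorem~\ref{thm:wdh} by an appeal to Theorem~\ref{thm:dh_Liouv}. The only genuinely new point is to check that the Liouville-type hypotheses of Theorem~\ref{thm:dh_Liouv} are satisfied in the relevant instance. Recall from the preliminaries that $\dg\cO(X)\le\db\cO(X)$, so it suffices to compute $\db\cO(X)=\dh_{\cO(X)^e}\cO(X)$ and to bound $\dg\cO(X)$ from below. As in the proof of Corollary~\ref{cor:HKR}, I would identify $\cO(X)^e$ with $\cO(X\times X)$; under this identification the $\cO(X)^e$-module $\cO(X)$ becomes the Stein module $\cO(\Delta)$, where $\Delta\subset X\times X$ is the diagonal, a closed submanifold of codimension $\dim X$. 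Granting the hypotheses of Theorem~\ref{thm:dh_Liouv}, this yields
\[
\db\cO(X)=\dh\nolimits_{\cO(X)^e}\cO(X)=\dh\nolimits_{\cO(X\times X)}\cO(\Delta)=\codim_{X\times X}\Delta=\dim X,
\]
and hence $\dg\cO(X)\le\dim X$.

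The main obstacle is to verify that Theorem~\ref{thm:dh_Liouv} applies with ambient manifold $X\times X$ and submanifold $\Delta$; that is, that both $X\times X$ and $\Delta$ are Stein and of Liouville type. Since $\Delta\cong X$, the diagonal inherits both properties from $X$, and $X\times X$ is Stein because a product of Stein manifolds is Stein. The one nontrivial assertion is that $X\times X$ is of Liouville type. I would establish the general fact that a product $X_1\times X_2$ of Liouville-type manifolds is of Liouville type by a fibering argument. If $u$ is a plurisubharmonic function on $X_1\times X_2$ that is bounded above, then for each fixed $x_2\in X_2$ its restriction to the complex submanifold $X_1\times\{x_2\}$ is a bounded-above plurisubharmonic function on $X_1$, hence constant; thus $u(x_1,x_2)$ is independent of $x_1$, and fixing any $x_1^0$ exhibits $x_2\mapsto u(x_1^0,x_2)$ as a bounded-above plurisubharmonic function on $X_2$, which is therefore constant. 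Consequently $u$ is constant, so $X_1\times X_2$ is of Liouville type. Taking $X_1=X_2=X$ supplies the missing hypothesis.

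For the reverse inequality I would apply Theorem~\ref{thm:dh_Liouv} to the closed submanifold $Y=\{x_0\}$, which is trivially of Liouville type, obtaining
\[
\dg\cO(X)\ge\dh\nolimits_{\cO(X)}\cO(\{x_0\})=\codim_X\{x_0\}=\dim X.
\]
Combining this with the bound $\dg\cO(X)\le\db\cO(X)=\dim X$ from the first paragraph gives $\dg\cO(X)=\db\cO(X)=\dim X$, as required.
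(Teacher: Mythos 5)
Your proof is correct and follows essentially the same route as the paper, whose entire argument for this corollary is the one-line remark that one argues as in Corollary~\ref{cor:wdg}, now invoking Theorem~\ref{thm:dh_Liouv} in place of Theorem~\ref{thm:wdh}. The one detail you supply that the paper leaves implicit --- that $X\times X$ is again of Liouville type, which you verify by restricting a bounded-above plurisubharmonic function to the slices $X\times\{x_2\}$ and $\{x_1^0\}\times X$ --- is precisely the right hypothesis to check, and your slicing argument for it is correct.
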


\section{Van den Bergh isomorphisms for $\cO(X)$}
\label{sect:VdB}

The Van den Bergh isomorphisms are certain relations between Hochschild homology
and cohomology of associative algebras. Special cases of such isomorphisms
can be traced back to the origins of homological algebra, but systematically
they were studied only in 1998 by M.~Van den Bergh \cite{VdB}.
For more recent results involving the Van den Bergh isomorphisms,
see \cite{Far,Ginzb,Ciccoli,DTT,Dolg,Lambre,Menichi}, to cite a few.
Below we will use a Fr\'echet algebra version of
the Van den Bergh isomorphisms, which was introduced
and applied in \cite{Pir_Nova} to some problems of Topological Homology
(see also \cite{Pir_dgdb}).

Let $A$ be a Fr\'echet algebra. A Fr\'echet $A$-bimodule $M$ is {\em invertible}
if there exists a Fr\'echet $A$-bimodule $M^{-1}$ such that
$M\ptens{A} M^{-1}\cong M^{-1}\ptens{A} M\cong A$ in $A\bimod A$.
For example, if $X$ is a Stein manifold, then for each
line sheaf $\cL$ of $\cO_X$-modules the $\cO(X)$-bimodule $\cL(X)$ is
invertible, and $\cL(X)^{-1}=\cL^{-1}(X)$, where
$\cL^{-1}=\cHom_{\cO_X}(\cL,\cO_X)$. More examples can be found in \cite{Pir_dgdb}.
By \cite[Prop. 5.2.6]{Pir_Nova}, each invertible Fr\'echet $A$-bimodule
is projective in $A\lmod$ and in $\rmod A$.

Following \cite{Far} (see also \cite{Pir_dgdb} for the Fr\'echet algebra case),
we say that $A$
{\em satisfies the Van den Bergh condition} $\VdB(n)$ if there exists an invertible
Fr\'echet $A$-bimodule $L$ (a {\em dualizing bimodule})
such that for each Fr\'echet
$A$-bimodule $M$ and each $i\in\Z$ there is a vector space isomorphism
\begin{equation*}
\label{VdB_top}
\cH^i(A,M)\cong\cH_{n-i}(A,L\ptens{A} M).
\end{equation*}
For example, if $X$ is a smooth manifold, then $C^\infty(X)$ satisfies
$\VdB(n)$ with $n=\dim X$ and $L=T^n(X)$, the module of smooth $n$-polyvector
fields on $X$ \cite{Pir_Nova}. We refer to \cite{Pir_dgdb} for more examples.

The goal of this section is to establish the Van den Bergh isomorphisms for
$A=\cO(X)$, where $X$ is a Stein manifold of Liouville type.

\begin{lemma}
\label{lemma:tens_flat}
Let $A$ be a Fr\'echet algebra, and let $C=(C^i,d^i)$ be a cochain complex
in $\rmod A$ such that the cohomology spaces $H^i(C)$ are Hausdorff.
Let $F\in A\lmod$ be a flat Fr\'echet module, and assume that either all the $C^i$'s are nuclear
or $A$ and $F$ are nuclear. Then the cohomology spaces
$H^i(C\ptens{A} F)$ are also Hausdorff, and there exist canonical topological isomorphisms
\begin{equation}
\label{H-tens}
H^i(C)\ptens{A} F\cong H^i(C\ptens{A} F).
\end{equation}
Specifically, if $\xi\in H^i(C)$ is represented by an $i$-cocycle $c\in C^i$,
then \eqref{H-tens} takes $\xi\otimes_A x$ to the cohomology class of $c\otimes_A x$.
\end{lemma}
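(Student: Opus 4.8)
The plan is to break the cochain complex $C$ into short exact sequences, tensor these with $F$, and reassemble. Write $Z^i=\Ker d^i$ and $B^i=\Im d^{i-1}$, so that $H^i(C)=Z^i/B^i$. Since $d^i$ is continuous, $Z^i$ is a closed submodule of $C^i$; and since $H^i(C)$ is assumed Hausdorff, $B^i$ is closed in $Z^i$ (hence in $C^i$), so that $B^i$, $Z^i$, and $H^i(C)$ are again Fr\'echet right $A$-modules. The hypothesis that $H^{i+1}(C)$ is Hausdorff likewise forces $\Im d^i$ to be closed in $C^{i+1}$, and then the open mapping theorem identifies the quotient topology on $C^i/Z^i$ with the subspace topology on $\Im d^i$. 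I therefore obtain, for every $i$, two topologically exact sequences of Fr\'echet right $A$-modules
\[
0\lar Z^i\lar C^i\lar B^{i+1}\lar 0,\qquad 0\lar B^i\lar Z^i\lar H^i(C)\lar 0 .
\]

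The heart of the matter --- and the step I expect to be the main obstacle --- is to show that applying $(-)\ptens{A}F$ to such a topologically exact short exact sequence of Fr\'echet right $A$-modules again produces a topologically exact sequence. The difficulty is that these sequences need not be admissible, so the definition of flatness does not apply to them directly; this is exactly where the nuclearity hypothesis must intervene. My approach is to resolve $F$ by the (admissible, projective) bar resolution $\cdots\to A\Ptens A\Ptens F\to A\Ptens F\to F\to 0$, so that for any $N\in\rmod A$ the complex $\big(N\Ptens A^{\Ptens n}\Ptens F\big)_n$ computes $\Tor_n^A(N,F)$. Given a topologically exact sequence $0\to M'\to M\to M''\to 0$, the key observation is that tensoring it over $\CC$ with the fixed Fr\'echet space $A^{\Ptens n}\Ptens F$ preserves exactness: in the case where $A$ and $F$ are nuclear this factor is nuclear, and in the case where the $C^i$ (hence $M'$, $M$, $M''$) are nuclear the short exact sequence itself has nuclear terms --- in either case Grothendieck's theorem on the exactness of the completed projective tensor product in the presence of nuclearity applies. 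This yields a short exact sequence of bar complexes, whose long exact homology sequence reads
\[
\cdots\lar \Tor_1^A(M'',F)\lar M'\ptens{A}F\lar M\ptens{A}F\lar M''\ptens{A}F\lar 0 .
\]
Since $F$ is flat we have $\Tor_1^A(M'',F)=0$, so the sequence $0\to M'\ptens{A}F\to M\ptens{A}F\to M''\ptens{A}F\to 0$ is exact as vector spaces; right-exactness of $\ptens{A}$ gives the quotient topology on the right-hand term, the image of the left-hand map is closed (being the kernel of the next map), and the open mapping theorem upgrades algebraic exactness to topological exactness.

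Finally I would reassemble. Applying the preceding to the two families of sequences and using that $d^i\otimes\id$ factors as the surjection $C^i\ptens{A}F\twoheadrightarrow B^{i+1}\ptens{A}F$ followed by the injection $B^{i+1}\ptens{A}F\hookrightarrow C^{i+1}\ptens{A}F$, I obtain $\Ker(d^i\otimes\id)=Z^i\ptens{A}F$ and $\Im(d^{i-1}\otimes\id)=B^i\ptens{A}F$ as Fr\'echet subspaces of $C^i\ptens{A}F$. Hence
\[
H^i(C\ptens{A}F)=(Z^i\ptens{A}F)/(B^i\ptens{A}F)\cong H^i(C)\ptens{A}F,
\]
the last isomorphism coming from the tensored second family; since $B^i\ptens{A}F$ is closed, the left-hand side is Hausdorff. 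Tracing a cocycle $c\in Z^i$ through these identifications shows that the class $\xi\otimes_A x$ (with $\xi=[c]$) is sent to the cohomology class of $c\otimes_A x$, which is the asserted explicit formula. The only points requiring care beyond bookkeeping are the repeated appeals to the open mapping theorem to pass between algebraic and topological exactness, and the verification that the nuclearity hypothesis covers both stated cases in the tensoring step above.
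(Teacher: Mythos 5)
Your argument is correct, and its overall skeleton (split $C$ into exact sequences involving $Z^i$, $B^i$, $H^i$; tensor; reassemble via the open mapping theorem) coincides with the paper's, which works with the single four-term sequences $0\to Z^i\to C^i\to Z^{i+1}\to H^{i+1}\to 0$ rather than your two families of short exact sequences --- an immaterial difference. Where you genuinely diverge is at the step you correctly single out as the heart of the matter: the paper simply cites the fact that, under the stated nuclearity hypotheses, applying $(-)\ptens{A}F$ to a (non-admissible) exact sequence of Fr\'echet modules with $F$ flat preserves exactness (referring to Taylor and to \cite[3.1.12]{Eschm_Put}), whereas you prove this from scratch by resolving $F$ with the bar resolution, invoking Grothendieck's exactness of $\Ptens$ with a nuclear factor to get a short exact sequence of bar complexes, and reading off the long exact $\Tor$ sequence with $\Tor_1^A(M'',F)=0$. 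That self-contained derivation is valid and makes the role of each hypothesis transparent (nuclearity enters only through Grothendieck's theorem, flatness only through the vanishing of $\Tor_1$ and the Hausdorffness of $\Tor_0$); its cost is length, plus one small point you should make explicit: identifying the tail term $\Tor_0^A(N,F)$ of your long exact sequence with $N\ptens{A}F$ uses not just $\Tor_1=0$ but also that $\Tor_0^A(N,F)$ is Hausdorff for every $N$, which is exactly the condition $\wdh_A F=0$ characterizing flatness in this framework, so it is available.
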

\begin{proof}
Let $Z^i=Z^i(C)$ denote the space of $i$-cocycles
of $C$. Write also $H^i=H^i(C)$ for short.
For each $i$ we have an exact sequence
\[
0\to Z^i\to C^i\xra{d^i} Z^{i+1}\to H^{i+1}\to 0
\]
of right Fr\'echet $A$-modules. Since $F$ is flat, it follows from the nuclearity
assumptions (see, e.g., \cite[3.1.12]{Eschm_Put}) that the tensored sequence
\begin{equation}
\label{tens_flat_1}
0\to Z^i\ptens{A} F\to C^i\ptens{A} F\xra{d^i\otimes_A\id_F}
Z^{i+1}\ptens{A} F\to H^{i+1}\ptens{A} F\to 0
\end{equation}
is exact. Using the exactness of \eqref{tens_flat_1} first for $i$, and then
for $i+1$, we obtain topological isomorphisms
\begin{equation}
\label{Z_tens}
\begin{split}
Z^i\ptens{A} F
&\cong \Ker\bigl(C^i\ptens{A} F\xra{d^i\otimes_A \id_F} Z^{i+1}\ptens{A} F\bigr)\\
&=\Ker\bigl(C^i\ptens{A} F\xra{d^i\otimes_A \id_F} C^{i+1}\ptens{A} F\bigr)
=Z^i(C\ptens{A} F).
\end{split}
\end{equation}
Now it follows from the exactness of \eqref{tens_flat_1} for $i-1$
and from \eqref{Z_tens} that
\[
\begin{split}
H^i\ptens{A} F
&\cong\Coker\bigl(C^{i-1}\ptens{A} F\xra{d^{i-1}\ptens{A}\id_F} Z^i\ptens{A} F\bigr)\\
&\cong\Coker\bigl(C^{i-1}\ptens{A} F\xra{d^{i-1}\ptens{A}\id_F} Z^i(C\ptens{A} F)\bigr)
=H^i(C\ptens{A} F).
\end{split}
\]
It follows from the construction that, for each $c\in Z^i$ and $x\in F$,
the resulting isomorphism \eqref{H-tens}
indeed takes $(c+\Im d^{i-1})\otimes_A x$ to $c\otimes_A x+\Im(d^{i-1}\otimes_A\id_F)$,
as required.
\end{proof}

In what follows, we will need some topological modules which are not Fr\'echet
modules. Let $A$ be a Fr\'echet algebra.
A {\em left $A$-$\Ptens$-module} is a left $A$-module $M$ endowed with
a complete locally convex topology in such a way that
the action $A\times M\to M$ is continuous. Right $A$-$\Ptens$-modules and
$A$-$\Ptens$-bimodules are defined similarly. If $M$ is a right
$A$-$\Ptens$-module and $N$
is a left $A$-$\Ptens$-module, then their $A$-module tensor product
$M\ptens{A}N$ is defined similarly to the case of Fr\'echet modules
(see Section~\ref{sect:prelim}). The only exception is that
the quotient $(M\Ptens N)/L$ need not be complete in the nonmetrizable case,
so $M\ptens{A}N$ is defined to be the completion of $(M\Ptens N)/L$.

Recall now a construction from \cite{Pir_Nova}. Let $A$ be a Fr\'echet
algebra, and let $M,N\in A\lmod$. Fix a projective resolution $P\to M\to 0$
of $M$ in $A\lmod$. Each $\h_A(P_i,A)$ has a natural structure of a right
$A$-$\Ptens$-module given by $(\varphi\cdot a)(p)=\varphi(p) a$ for
$a\in A$, $p\in P$. We have a map of complexes
\begin{equation}
\label{h-tens}
\h_A(P,A)\ptens{A} N\to\h_A(P,N),\quad
\varphi\otimes_A n\mapsto (p\mapsto\varphi(p)\cdot n).
\end{equation}
If, for some $n$, the spaces $\Ext_A^n(M,A)$ and $\Ext^n_A(M,N)$ are Hausdorff
and complete, then $\Ext^n_A(M,A)$ becomes a right $A$-$\Ptens$-module
in a natural way, and we have continuous linear maps
\begin{equation}
\label{Ext-tens}
\Ext^n_A(M,A)\ptens{A} N\to H^n(\h_A(P,A)\ptens{A} N)\to \Ext^n_A(M,N).
\end{equation}
Here, for each $\xi\in\Ext^n_A(M,A)$ represented by an $n$-cocycle $\varphi\in \h_A(P_n,A)$,
the first arrow in \eqref{Ext-tens} takes $\xi\otimes_A n$ to the cohomology class
of $\varphi\otimes_A n$. The second arrow in \eqref{Ext-tens} is induced by \eqref{h-tens}.
The composite map
\begin{equation}
\label{Ext-Ext}
\Ext^n_A(M,A)\ptens{A} N\to \Ext^n_A(M,N)
\end{equation}
does not depend on the choice of the projective resolution $P$.

\begin{lemma}
\label{lemma:hom-Fre}
Let $A$ be a Fr\'echet algebra, and let $P$ be a finitely generated, strictly
projective left Fr\'echet $A$-module. Then for each $N\in A\lmod$ $\h_A(P,N)$
is a Fr\'echet space. Moreover, if $N$ is nuclear, then so is $\h_A(P,N)$.
\end{lemma}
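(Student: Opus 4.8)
The plan is to reduce everything to the case of a finitely generated free module and then transport the conclusion along a direct-sum decomposition. The starting observation is that a finitely generated, strictly projective left Fr\'echet $A$-module $P$ is a direct summand of $A^p$ for some $p\in\N$: choosing a continuous epimorphism $q\colon A^p\to P$ (which exists because $P$ is finitely generated) and applying strict projectivity to $q$ together with $\id_P$, one obtains a morphism $s\colon P\to A^p$ with $qs=\id_P$, so that $A^p\cong P\oplus\Ker q$ in $A\lmod$. This is exactly the fact already invoked in the proof of Corollary~\ref{cor:For_adm}. Since $\h_A(-,N)$ is additive and carries finite direct sums to finite direct products, it then suffices to understand $\h_A(A^p,N)$, and for this it is enough to treat $\h_A(A,N)$.

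The core step is the topological identification $\h_A(A,N)\cong N$, given by $\varphi\mapsto\varphi(1_A)$ with inverse $x\mapsto\lambda_x$, where $\lambda_x(a)=a\cdot x$. That $\lambda_x$ is a morphism, and that evaluation at $1_A$ is continuous (the singleton $\{1_A\}$ being bounded), are immediate; the point that needs care is that $x\mapsto\lambda_x$ is continuous for the topology of uniform convergence on bounded subsets of $A$. Here I would use the joint continuity of the module action: for every continuous seminorm $q$ on $N$ there are continuous seminorms $\|\cdot\|$ on $A$ and $p$ on $N$ with $q(a\cdot x)\le\|a\|\,p(x)$ for all $a\in A$, $x\in N$. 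Given a bounded set $B\subset A$, one has $C_B:=\sup_{a\in B}\|a\|<\infty$, whence $\sup_{a\in B}q(\lambda_x(a))\le C_B\,p(x)$, which is precisely the estimate making $x\mapsto\lambda_x$ continuous. This verification of the topological (not merely algebraic) isomorphism is the main obstacle, since it is the only place where the specific topology on $\h_A$ and the joint continuity of the action genuinely enter.

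Granting $\h_A(A,N)\cong N$, additivity gives $\h_A(A^p,N)\cong N^p$, and the decomposition $A^p\cong P\oplus\Ker q$ exhibits $\h_A(P,N)$ as a direct summand of $N^p$. The conclusion is then purely functional-analytic: a complemented subspace of a Fr\'echet space is closed, hence Fr\'echet, so $\h_A(P,N)$ is a Fr\'echet space; and if $N$ is nuclear then so is $N^p$, while nuclearity is inherited by subspaces (equivalently, by direct summands), so $\h_A(P,N)$ is nuclear as well.
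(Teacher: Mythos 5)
Your proof is correct and follows essentially the same route as the paper's: reduce to $P=A$ via the direct-summand decomposition $A^p\cong P\oplus\Ker q$, establish the topological isomorphism $\h_A(A,N)\cong N$ by evaluation at $1_A$, and conclude by additivity, with the Fr\'echet and nuclearity claims following since complemented subspaces of Fr\'echet (resp.\ nuclear) spaces are Fr\'echet (resp.\ nuclear). The paper merely states the continuity of $x\mapsto\lambda_x$ is ``easily checked''; your seminorm estimate via joint continuity of the action is precisely the intended verification.
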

\begin{proof}
If $P=A$, then $\h_A(A,N)$ is topologically isomorphic to $N$ via the map
$\varphi\mapsto\varphi(1)$; the continuity of the inverse map
$n\mapsto (a\mapsto a\cdot n)$ is easily checked. The general case follows
by additivity and functoriality.
\end{proof}

Following \cite{Pir_dgdb}, we say that a left Fr\'echet $A$-module $M$
is {\em of strictly finite type} if $M$ has a resolution $P\to M\to 0$
consisting of finitely generated, strictly projective Fr\'echet
$A$-modules. The Fr\'echet algebra $A$ is said to be {\em of finite type}
if $A$ is of strictly finite type in $A^e\lmod$.
For example, if $X$ is a Stein manifold, $Y\subset X$ is a closed submanifold,
and both $X$ and $Y$ are of Liouville type, then $\cO(Y)$ is of strictly
finite type over $\cO(X)$ (see Corollary~\ref{cor:For_adm}). Identifying $\cO(X)^e$
with $\cO(X\times X)$, we see, in particular, that the algebra
$\cO(X)$ is of finite type.

\begin{corollary}
\label{cor:Ext-Fre}
Let $A$ be a Fr\'echet algebra, and let $M\in A\lmod$ be of strictly finite type.
If $\Ext_A^n(M,N)$ is Hausdorff for some $n$, then $\Ext_A^n(M,N)$ is a Fr\'echet
space.
\end{corollary}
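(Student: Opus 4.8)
The plan is to compute $\Ext_A^n(M,N)$ directly from the given resolution and then invoke Lemma~\ref{lemma:hom-Fre} to place everything inside the category of Fr\'echet spaces, where Hausdorffness of a cohomology space automatically upgrades to the full Fr\'echet property. First I would fix a resolution $P\to M\to 0$ by finitely generated, strictly projective Fr\'echet $A$-modules, which exists because $M$ is of strictly finite type. Since strictly projective modules are projective and a resolution is by definition admissible, $P$ is a projective resolution of $M$, so that $\Ext_A^n(M,N)=H^n(\h_A(P,N))$. By Lemma~\ref{lemma:hom-Fre}, each term $\h_A(P_i,N)$ is a Fr\'echet space, and the coboundary maps $d^i\colon\h_A(P_i,N)\to\h_A(P_{i+1},N)$, being given by precomposition with the fixed continuous module morphisms $P_{i+1}\to P_i$, are continuous. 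Thus $\h_A(P,N)$ is a cochain complex of Fr\'echet spaces with continuous differentials.

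Next I would observe that $\Ker d^n$, as the preimage of $0$ under a continuous linear map between Fr\'echet spaces, is a closed subspace of $\h_A(P_n,N)$, hence itself a Fr\'echet space. The cohomology $\Ext_A^n(M,N)=\Ker d^n/\Im d^{n-1}$ is therefore the quotient of a Fr\'echet space by a linear subspace. By a standard fact about topological vector spaces, such a quotient is Hausdorff precisely when the subspace $\Im d^{n-1}$ is closed in $\Ker d^n$. Under the hypothesis that $\Ext_A^n(M,N)$ is Hausdorff, $\Im d^{n-1}$ is closed, and the quotient of a Fr\'echet space by a closed subspace is again a Fr\'echet space. This yields the assertion.

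There is essentially no serious obstacle here: the whole content sits in Lemma~\ref{lemma:hom-Fre}, which is exactly what guarantees that the Hom-complex consists of Fr\'echet spaces rather than merely complete locally convex spaces. The only points requiring care are routine: the continuity of the coboundary operators, the equivalence between Hausdorffness of the quotient and closedness of $\Im d^{n-1}$, and the permanence of the Fr\'echet property under passage to closed subspaces and to Hausdorff quotients. Note that finiteness of the length of $P$ is not needed, since only the terms $P_{n-1}$ and $P_n$ enter the computation of $H^n(\h_A(P,N))$.
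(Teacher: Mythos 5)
Your argument is correct and is exactly the one the paper intends: the corollary is stated without proof precisely because it follows from Lemma~\ref{lemma:hom-Fre} by computing $\Ext_A^n(M,N)$ from a resolution by finitely generated, strictly projective modules and using that a Hausdorff quotient of a closed subspace of a Fr\'echet space is Fr\'echet. Your remarks on the continuity of the coboundary maps and the irrelevance of the length of $P$ are accurate and complete the routine verification.
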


\begin{lemma}
\label{lemma:Ext-Ext}
Let $A$ be a nuclear Fr\'echet algebra, and let $M\in A\lmod$ be of
strictly finite type. Suppose that $\Ext^n_A(M,A)$ is Hausdorff
for all $n\in\Z_+$. Then for each flat $N\in A\lmod$ and each $n\in\Z_+$
$\Ext^n_A(M,N)$ is Hausdorff as well, and the canonical map \eqref{Ext-Ext}
is a topological isomorphism.
\end{lemma}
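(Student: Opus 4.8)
The plan is to exhibit the canonical map \eqref{Ext-Ext} as the composite of two topological isomorphisms: one supplied by Lemma~\ref{lemma:tens_flat}, and one obtained from a degreewise analysis of the map \eqref{h-tens}. First I would fix a resolution $P\to M\to 0$ of $M$ by finitely generated, strictly projective Fr\'echet $A$-modules, which exists because $M$ is of strictly finite type; since the $P_i$ are projective and the resolution is admissible, it is a projective resolution, so $\Ext^n_A(M,N)=H^n(\h_A(P,N))$ for every $N\in A\lmod$. I then set $C^i=\h_A(P_i,A)$. By Lemma~\ref{lemma:hom-Fre} and the nuclearity of $A$, each $C^i$ is a nuclear Fr\'echet space; moreover, being a direct summand of some $\h_A(A^{k_i},A)\cong A^{k_i}$, it is a Fr\'echet right $A$-module. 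Thus $C=(C^i,d^i)$ is a cochain complex in $\rmod A$ with $H^i(C)=\Ext^i_A(M,A)$, which is Hausdorff by hypothesis.

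The key degreewise step is to show that \eqref{h-tens} is an isomorphism of complexes $\h_A(P,A)\ptens{A}N\to\h_A(P,N)$. For $P_i=A$ the two sides identify (via $\varphi\mapsto\varphi(1)$) with $A\ptens{A}N$ and $N$, and the map becomes the canonical isomorphism $A\ptens{A}N\cong N$. For $P_i=A^{k_i}$ this follows by additivity of $\ptens{A}N$ and of $\h_A(-,A)$, and for an arbitrary finitely generated, strictly projective $P_i$ --- a direct summand of some $A^{k_i}$ --- it follows by passing to the corresponding direct summand. Hence \eqref{h-tens} is a topological isomorphism in each degree, and so an isomorphism of complexes.

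Next I would apply Lemma~\ref{lemma:tens_flat} to the complex $C$ and the flat module $F=N$; the nuclearity hypothesis of that lemma holds because all the $C^i$ are nuclear. This yields that each $H^i(C\ptens{A}N)$ is Hausdorff and that $H^i(C)\ptens{A}N\cong H^i(C\ptens{A}N)$. Combining this with the degreewise isomorphism of the previous paragraph gives
\[
\Ext^n_A(M,A)\ptens{A}N=H^n(C)\ptens{A}N\cong H^n(C\ptens{A}N)\cong H^n(\h_A(P,N))=\Ext^n_A(M,N).
\]
In particular $\Ext^n_A(M,N)$, being topologically isomorphic to the Hausdorff space $H^n(C\ptens{A}N)$, is Hausdorff, hence a Fr\'echet (thus complete) space by Corollary~\ref{cor:Ext-Fre}; this is exactly the condition needed for the canonical map \eqref{Ext-Ext} to be defined.

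It remains to identify the composite isomorphism just constructed with the canonical map \eqref{Ext-Ext}. This is a matter of matching the explicit formulas recorded earlier: the isomorphism \eqref{H-tens} of Lemma~\ref{lemma:tens_flat} carries $\xi\otimes_A x$ (with $\xi$ represented by a cocycle $\varphi$) to the class of $\varphi\otimes_A x$, which is precisely the first arrow of \eqref{Ext-tens}, while the second arrow of \eqref{Ext-tens}, induced by \eqref{h-tens}, is exactly the isomorphism of complexes established in the second paragraph. I expect the main obstacle to lie in this final identification --- verifying that the two independently defined maps agree on representing cocycles and descend compatibly to cohomology, so that the composite really is the \emph{canonical} map rather than merely \emph{some} isomorphism --- together with the degreewise check for \eqref{h-tens}, where the finitely generated, strictly projective hypothesis on the $P_i$ is indispensable.
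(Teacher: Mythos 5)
Your proposal is correct and follows essentially the same route as the paper's own proof: fix a resolution by finitely generated strictly projective modules, use Lemma~\ref{lemma:hom-Fre} to verify the hypotheses of Lemma~\ref{lemma:tens_flat} (making the first arrow of \eqref{Ext-tens} a topological isomorphism), and use additivity/direct-summand reduction to the case $P_i=A$ to see that \eqref{h-tens}, hence the second arrow, is also a topological isomorphism. Your final concern about matching the composite with the canonical map is moot, since \eqref{Ext-Ext} is by definition the composite of the two arrows in \eqref{Ext-tens}, which are exactly the maps you analyze.
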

\begin{proof}
Fix a resolution $P\to M\to 0$
consisting of finitely generated, strictly projective Fr\'echet
$A$-modules. By Lemma \ref{lemma:hom-Fre}, the cochain complex
$C=\h_A(P,A)$ satisfies the conditions of Lemma \ref{lemma:tens_flat}.
Therefore the first arrow in \eqref{Ext-tens} is a topological isomorphism.
Since each $P_i$ is finitely generated and strictly projective,
it follows that \eqref{h-tens} is also a topological isomorphism
(cf. the proof of Lemma~\ref{lemma:hom-Fre}).
Hence the second arrow in \eqref{Ext-tens} is a topological isomorphism as well.
This completes the proof.
\end{proof}

For the reader's convenience, we recall two results from \cite{Pir_Nova}.

\begin{prop}[{\cite[Prop. 5.2.1]{Pir_Nova}}]
\label{prop:Nova1}
Let $M$ be a left Fr\'echet module of finite projective homological dimension
over a Fr\'echet algebra $A$. Suppose that there exists $n\in\N$
such that for each projective module
$P\in A\lmod$ the following conditions hold:
\begin{mycompactenum}
\item $\Ext^i_A(M,P)=0$ unless $i=n$;
\item $\Ext^n_A(M,P)$ is Hausdorff and complete;
\item the canonical map
\begin{equation}
\label{extmap2}
\Ext^n_A(M,A)\ptens{A}P\to\Ext^n_A(M,P)
\end{equation}
is a topological isomorphism.
\end{mycompactenum}
Put $L=\Ext^n_A(M,A)$.
Then for each $N\in A\lmod$ there exist vector
space isomorphisms
\[
\Ext^i_A(M,N)\cong\Tor_{n-i}^A(L,N).
\]
\end{prop}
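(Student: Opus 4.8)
The plan is to compare the two iterated cohomologies of a double complex built from projective resolutions of $M$ and of $N$. Since $\dh_A M<\infty$, I would fix a finite projective resolution $P_\bullet\to M\to 0$ in $A\lmod$, concentrated in degrees $0\le p\le\dh_A M$, and for an arbitrary $N\in A\lmod$ a projective resolution $R_\bullet\to N\to 0$. Form the double complex $D^p_q=\h_A(P_p,R_q)$, with differentials induced by that of $P_\bullet$ in the (cohomological) $p$-direction and by that of $R_\bullet$ in the $q$-direction. As $P_\bullet$ has finite length, $D$ is bounded in the $p$-direction, so both spectral sequences of the double complex converge to the cohomology of the total complex. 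Because the conclusion asserts only vector-space isomorphisms, it suffices to run the whole argument at the level of underlying vector spaces, which sidesteps any question of topological convergence.

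Computing the $q$-direction first, note that each $P_p$ is projective, so $\h_A(P_p,-)$ carries the admissible resolution $R_\bullet\to N\to 0$ to an exact sequence; hence the $p$-th column is a resolution of $\h_A(P_p,N)$. The first iterated cohomology therefore collapses onto the edge $q=0$, and the residual $p$-differential computes $H^p(\h_A(P_\bullet,N))=\Ext^p_A(M,N)$. Thus the total cohomology in degree $s$ is $\Ext^s_A(M,N)$.

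Computing the $p$-direction first, the $p$-th cohomology of the row $\h_A(P_\bullet,R_q)$ is by definition $\Ext^p_A(M,R_q)$, and this is where the three hypotheses are used. Since $R_q$ is projective, (i) forces $\Ext^p_A(M,R_q)=0$ for $p\neq n$, while (iii)---with the Hausdorffness and completeness from (ii)---identifies the surviving term with $L\ptens{A}R_q=\Ext^n_A(M,A)\ptens{A}R_q$ through the canonical map \eqref{extmap2}. So the first iterated cohomology collapses onto the single row $p=n$ with entries $L\ptens{A}R_q$; by naturality of \eqref{Ext-Ext} in the second variable, the residual differential is the one induced by $R_\bullet$, so this row is exactly the complex $L\ptens{A}R_\bullet$. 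Its homology is $\Tor_q^A(L,N)$ (since $R_\bullet$, being projective, is in particular a flat resolution of $N$), sitting in total degree $s=n-q$. Comparing with the previous paragraph gives $\Ext^i_A(M,N)\cong\Tor_{n-i}^A(L,N)$, the case $i=n$ being the isomorphism induced by \eqref{Ext-Ext}.

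The main obstacle is the topological bookkeeping hidden in the second computation, not the formal spectral-sequence comparison. The modules $\h_A(P_p,A)$, and therefore $L$, need not be Fr\'echet but only $A$-$\Ptens$-modules, so the tensor products $L\ptens{A}R_q$ require completions, and one must check that the isomorphism $\Ext^n_A(M,R_q)\cong L\ptens{A}R_q$ of (iii) is natural in $R_q$, so that the collapsed row is genuinely $L\ptens{A}R_\bullet$ and its homology genuinely $\Tor^A_\bullet(L,N)$. Conditions (ii) and (iii) are tailored precisely to make these identifications legitimate: completeness turns $L$ into a bona fide complete module, while the prescribed shape of \eqref{extmap2} records the needed naturality. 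Once those compatibilities are in place, the comparison of the two collapsed spectral sequences is purely formal.
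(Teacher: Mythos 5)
The paper does not actually prove this proposition: it is quoted verbatim from \cite[Prop.~5.2.1]{Pir_Nova}, so there is no in-paper argument to compare your proof against. As a standalone argument your double-complex proof is correct: since $P_\bullet$ is bounded (this is where $\dh_A M<\infty$ is used), the filtration induced on each total degree is finite, so both spectral sequences converge without further ado; hypothesis (i) collapses the second one onto the single row $p=n$; and the naturality of \eqref{extmap2} in the second variable, which is visible from the explicit description of \eqref{Ext-tens}, identifies that row with $L\ptens{A}R_\bullet$, whose homology is $\Tor_\bullet^A(L,N)$ because the projective resolution $R_\bullet$ is in particular a flat one. The one caveat, which you correctly flag, is that $L$ and the terms $\h_A(P_p,R_q)$ are a priori only complete $\Ptens$-modules (respectively, nonmetrizable topological vector spaces) rather than Fr\'echet objects; condition (ii) is precisely what makes $L$ complete and \eqref{extmap2} well defined, and since the asserted isomorphisms are only of vector spaces, the purely algebraic spectral-sequence formalism applied to the underlying complexes of vector spaces suffices.
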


\begin{prop}[{\cite[Theorem 5.2.4]{Pir_Nova}}]
\label{prop:Nova2}
Let $A$ be a Fr\'echet algebra of finite bidimension. Suppose that there
exists $n\in\N$ such that for each projective bimodule
$P\in A\bimod A$
the following conditions hold:
\begin{mycompactenum}
\item $\cH^i(A,P)=0$ unless $i=n$;
\item $\cH^n(A,P)$ is Hausdorff and complete;
\item the canonical map
\[
\cH^n(A,A^e)\ptens{A^e}P\to\cH^n(A,P)
\]
is a topological isomorphism.
\end{mycompactenum}
Finally, suppose that $L=\cH^n(A,A^e)$ is projective as a right
$A$-$\Ptens$-module.
Then for each bimodule $M\in A\bimod A$ there exist vector
space isomorphisms
\[
\cH^i(A,M)\cong\cH_{n-i}(A,L\ptens{A}M).
\]
\end{prop}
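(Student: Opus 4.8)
The plan is to deduce Proposition~\ref{prop:Nova2} from Proposition~\ref{prop:Nova1} by specializing to the enveloping algebra. Write $B=A^e=A\Ptens A^{\op}$ and regard $A$ as a left $B$-module, so that $\cH^i(A,N)=\Ext^i_B(A,N)$ and $\cH_i(A,N)=\Tor_i^B(N,A)$ for every $A$-bimodule $N$. Under this dictionary the hypothesis that $A$ has finite bidimension says exactly that $A$ has finite projective homological dimension over $B$, while conditions (i)--(iii) are precisely the three conditions of Proposition~\ref{prop:Nova1} for the algebra $B$ and the module $M=A$; note in particular that $\Ext^n_B(A,B)=\cH^n(A,A^e)=L$ and that $\ptens{B}=\ptens{A^e}$. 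First I would verify these identifications line by line, checking that the canonical map of (iii) in Proposition~\ref{prop:Nova1} coincides with the map of (iii) here.

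Applying Proposition~\ref{prop:Nova1} to $B$ and $M=A$ then yields, for every bimodule $M\in A\bimod A=B\lmod$, a vector space isomorphism
\[
\cH^i(A,M)=\Ext^i_B(A,M)\cong\Tor_{n-i}^B(L,M)=\Tor_{n-i}^{A^e}(L,M),
\]
where $L=\cH^n(A,A^e)$ carries its natural structure of a right $A^e$-$\Ptens$-module. It then remains to convert the right-hand side into Hochschild homology, that is, to produce a natural isomorphism
\[
\Tor_k^{A^e}(L,M)\cong\cH_k(A,L\ptens{A}M)=\Tor_k^{A^e}(L\ptens{A}M,A)\qquad(k=n-i).
\]

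To prove this identity I would first establish it in degree zero: a direct check on generators (exactly as in the purely algebraic case) gives a natural topological isomorphism $L\ptens{A^e}M\cong\cH_0(A,L\ptens{A}M)$, the point being that both sides are the quotient of $L\Ptens M$ by the same two families of relations coming from the left and right $A$-actions. I would then view the right-hand functor as the composite of $F\colon M\mapsto L\ptens{A}M$ (from bimodules to bimodules) followed by $G\colon W\mapsto\cH_0(A,W)=W\ptens{A^e}A$, whose derived functors are the $\cH_k(A,-)$. Here the extra hypothesis on $L$ enters: since $L$ is projective, hence flat, as a right $A$-$\Ptens$-module, the functor $F$ is exact. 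Moreover $F$ sends the free bimodule $A\Ptens V\Ptens A$ to $L\ptens{A}(A\Ptens V\Ptens A)\cong L\Ptens V\Ptens A$, which is of the form $U\Ptens A$ with $U=L\Ptens V$ and with $A$ acting freely on the right; by the standard extra-degeneracy (bar) homotopy such a bimodule, and any retract of it, is $\cH_k(A,-)$-acyclic for $k>0$. The composite-functor theorem, in its relative-homological form, then gives $\Tor_k^{A^e}(L,M)\cong\cH_k(A,L\ptens{A}M)$, and combining this with the preceding isomorphism completes the proof.

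I expect the main obstacle to lie entirely in this last identity, and specifically in its topological bookkeeping rather than in its algebra: one must ensure that the degreewise application of the degree-zero isomorphism, the exactness of $F$, and the acyclicity homotopy all survive completion, which is where the nuclearity of $A$ and the resulting good behaviour of $\ptens{A}$ and $\ptens{A^e}$ on the modules in play are used. By contrast, the formal reduction to Proposition~\ref{prop:Nova1} in the first two paragraphs is routine; the genuine content is the verification that $L\ptens{A}(-)$ is exact and carries projective bimodules to Hochschild-acyclic ones in the Fr\'echet setting.
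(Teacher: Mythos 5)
The paper itself offers no proof of this proposition: it is quoted verbatim from \cite[Theorem 5.2.4]{Pir_Nova}, so there is no internal argument to compare against. Your reduction is nevertheless the natural one and almost certainly the intended one. Taking $B=A^e$ and the module $A$ turns the hypotheses into those of Proposition~\ref{prop:Nova1} word for word (finite bidimension is finite projective dimension of $A$ over $B$, projective bimodules are exactly the projective left $B$-modules, and the canonical map of \eqref{Ext-tens} for $B$ is the map in condition (iii)), and the remaining content is precisely the identity $\Tor_k^{A^e}(L,M)\cong\cH_k(A,L\ptens{A}M)$, for which the extra projectivity hypothesis on $L$ is visibly designed. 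Your degree-zero identification and the extra-degeneracy acyclicity of retracts of $U\Ptens A$ are both correct.

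The one place where your argument as written does not close is the ``topological bookkeeping'' that you defer to nuclearity: the proposition assumes no nuclearity of $A$ whatsoever, so you cannot use it to propagate the exactness of $L\ptens{A}P_\bullet\to L\ptens{A}M$ through the bar complex. The fix is to use the projectivity of $L$ at full strength instead of downgrading it to flatness. A projective right $A$-$\Ptens$-module is a retract of a free module $E\Ptens A$, so the functor $L\ptens{A}(-)$ is a retract of $E\Ptens(-)$ and therefore carries admissible complexes to admissible (not merely exact) complexes; in particular $L\ptens{A}P_\bullet\to L\ptens{A}M\to 0$ is admissible. In the double complex $D_{p,q}=(L\ptens{A}P_p)\Ptens A^{\Ptens q}$ the rows then stay exact for free (tensoring an admissible complex with a fixed space preserves admissibility), while the columns are handled by your acyclicity observation, whose degree-zero term is $L\ptens{A^e}P_p$. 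The two edge computations yield $\Tor_k^{A^e}(L,M)$ and $\cH_k(A,L\ptens{A}M)$ respectively, with no nuclearity used anywhere. With that replacement your proof is complete and, for the nuclear algebras $\cO(X)$ to which the paper applies it, your original version also suffices.
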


We can now simplify conditions (i)--(iii) of Propositions \ref{prop:Nova1}
and \ref{prop:Nova2} as follows.

\begin{corollary}
\label{cor:VdB_mod}
Let $M$ be a left Fr\'echet module of strictly finite type and of
finite projective homological dimension
over a nuclear Fr\'echet algebra $A$. Suppose that
there exists $n\in\N$
such that $\Ext^i_A(M,A)=0$ unless $i=n$, and that
$L=\Ext^n_A(M,A)$ is Hausdorff.
Then for each $N\in A\lmod$ there exist vector
space isomorphisms
\[
\Ext^i_A(M,N)\cong\Tor_{n-i}^A(L,N).
\]
\end{corollary}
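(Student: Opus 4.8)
The plan is to reduce Corollary~\ref{cor:VdB_mod} to Proposition~\ref{prop:Nova1} by checking that the streamlined hypotheses force conditions (i)--(iii) of that proposition to hold for every projective $P\in A\lmod$. The dualizing module will of course be $L=\Ext^n_A(M,A)$, exactly as in Proposition~\ref{prop:Nova1}.

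First I would note that the hypotheses already guarantee that $\Ext^i_A(M,A)$ is Hausdorff for \emph{every} $i\in\Z_+$: for $i\neq n$ it vanishes, and for $i=n$ it equals $L$, which is Hausdorff by assumption. Since $A$ is nuclear and $M$ is of strictly finite type, this is precisely what is needed to invoke Lemma~\ref{lemma:Ext-Ext}. That lemma then yields, for every flat module $N\in A\lmod$ and every $i$, that $\Ext^i_A(M,N)$ is Hausdorff and that the canonical map $\Ext^i_A(M,A)\ptens{A}N\to\Ext^i_A(M,N)$ from \eqref{Ext-Ext} is a topological isomorphism.

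Next I would fix a projective $P\in A\lmod$; since every projective Fr\'echet module is flat (as recalled in Section~\ref{sect:prelim}), the previous step applies with $N=P$. Condition (i) follows at once: for $i\neq n$ we get $\Ext^i_A(M,P)\cong\Ext^i_A(M,A)\ptens{A}P=0$. Condition (iii) is exactly the assertion that \eqref{extmap2} is a topological isomorphism, which Lemma~\ref{lemma:Ext-Ext} supplies. For condition (ii), Hausdorffness of $\Ext^n_A(M,P)$ is again part of Lemma~\ref{lemma:Ext-Ext}; completeness is the single point needing separate justification, and here I would appeal to Corollary~\ref{cor:Ext-Fre}: because $M$ is of strictly finite type and $\Ext^n_A(M,P)$ is Hausdorff, it is in fact a Fr\'echet space, hence complete. (The same remark shows that $L$ itself is complete, so that $L=\Ext^n_A(M,A)$ is a legitimate choice of dualizing module.)

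With (i)--(iii) in hand, I would simply apply Proposition~\ref{prop:Nova1} with this $n$ and $L$, obtaining the desired isomorphisms $\Ext^i_A(M,N)\cong\Tor_{n-i}^A(L,N)$ for arbitrary $N\in A\lmod$. I do not anticipate any real obstacle: the argument is essentially bookkeeping, matching the simplified hypotheses to the more technical ones. The only step demanding genuine care is the completeness in (ii), which is exactly why Corollary~\ref{cor:Ext-Fre}, and hence the strictly-finite-type assumption on $M$, is actually used rather than mere Hausdorffness.
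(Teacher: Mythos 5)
Your proposal is correct and follows essentially the same route as the paper, which likewise deduces conditions (i)--(iii) of Proposition~\ref{prop:Nova1} for every flat (hence every projective) $P$ from Lemma~\ref{lemma:Ext-Ext} applied with $P=A$. Your extra attention to completeness in condition (ii) via Corollary~\ref{cor:Ext-Fre} is exactly the detail the paper leaves implicit in ``the rest is clear.''
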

\begin{proof}
Lemma \ref{lemma:Ext-Ext} shows that if conditions (i) and (ii)
of Proposition~\ref{prop:Nova1} hold for $P=A$, then conditions (i)--(iii)
hold for each flat $P\in A\lmod$. The rest is clear.
\end{proof}

The following result is an analytic version of Van den Bergh's theorem \cite{VdB}.
It follows from Proposition~\ref{prop:Nova2} in exactly the same way as
Corollary~\ref{cor:VdB_mod} follows from Proposition~\ref{prop:Nova1}.

\begin{corollary}
\label{cor:VdB_alg}
Let $A$ be a nuclear Fr\'echet algebra of finite type and of finite bidimension.
Suppose that there
exists $n\in\N$ such that $\cH^i(A,A^e)=0$ unless $i=n$, and that
$L=\cH^n(A,A^e)$ is Hausdorff and invertible as a Fr\'echet $A$-bimodule.
Then $A$ satisfies $\VdB(n)$ with dualizing bimodule $L$.
\end{corollary}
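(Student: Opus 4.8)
The plan is to verify the hypotheses of Proposition~\ref{prop:Nova2} and then invoke it verbatim, with the enveloping algebra $A^e$ playing the role of the base algebra and the bimodule $A\in A^e\lmod$ playing the role of $M$. First I would record the structural reductions that make this substitution legitimate. Since $A$ is nuclear, so is $A^e=A\Ptens A^{\op}$; the hypothesis that $A$ is of finite type means exactly that $A$ is of strictly finite type in $A^e\lmod$; and finite bidimension means $\dh_{A^e}A<\infty$. Using $A\bimod A\cong A^e\lmod$ and the identity $\cH^i(A,M)=\Ext^i_{A^e}(A,M)$, the standing assumptions translate into: $\Ext^i_{A^e}(A,A^e)=0$ for $i\neq n$, and $\Ext^n_{A^e}(A,A^e)=L$ is Hausdorff. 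In particular, every $\Ext^i_{A^e}(A,A^e)$ is Hausdorff.

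Next I would apply Lemma~\ref{lemma:Ext-Ext} with base algebra $A^e$ and module $A$, which is permissible by the reductions above. It yields, for every flat $P\in A^e\lmod$ and every $i$, that $\Ext^i_{A^e}(A,P)$ is Hausdorff and that the canonical map
\[
\Ext^i_{A^e}(A,A^e)\ptens{A^e}P\to\Ext^i_{A^e}(A,P)
\]
is a topological isomorphism. Since every projective bimodule is flat, this immediately furnishes conditions (i)--(iii) of Proposition~\ref{prop:Nova2} for every projective $P\in A\bimod A$: condition (iii) is the $i=n$ case of the displayed isomorphism; condition (i) follows because that isomorphism identifies $\cH^i(A,P)$ with $\cH^i(A,A^e)\ptens{A^e}P$, which vanishes for $i\neq n$; and condition (ii) holds because $\cH^n(A,P)$ is Hausdorff by the Lemma and therefore a Fr\'echet space, hence complete, by Corollary~\ref{cor:Ext-Fre} (here again using that $A$ is of strictly finite type over $A^e$).

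It then remains to discharge the one hypothesis of Proposition~\ref{prop:Nova2} that has no counterpart in Corollary~\ref{cor:VdB_mod}, namely that $L=\cH^n(A,A^e)$ is projective as a right $A$-$\Ptens$-module. This is exactly where the invertibility assumption is consumed: by \cite[Prop. 5.2.6]{Pir_Nova} an invertible Fr\'echet $A$-bimodule is projective in $\rmod A$, and a projective Fr\'echet module is a fortiori projective as an $A$-$\Ptens$-module, being a retract of a module of the form $E\Ptens A$. With all hypotheses verified, Proposition~\ref{prop:Nova2} produces isomorphisms $\cH^i(A,M)\cong\cH_{n-i}(A,L\ptens{A}M)$ for every $M\in A\bimod A$, which is precisely the statement that $A$ satisfies $\VdB(n)$ with dualizing bimodule $L$.

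The step I expect to require the most care is this last one, since it is the only genuinely new ingredient beyond a transcription of the module-level argument. Specifically, I would want to confirm that the right $A$-$\Ptens$-module structure on $L=\Ext^n_{A^e}(A,A^e)$ appearing in Proposition~\ref{prop:Nova2} coincides with the right-hand half of the $A$-bimodule structure on $L$ to which the invertibility hypothesis refers; granting this identification, projectivity in $\rmod A$ transfers to projectivity as an $A$-$\Ptens$-module and the argument closes. Everything else is a faithful copy of the proof of Corollary~\ref{cor:VdB_mod}.
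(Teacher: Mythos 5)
Your proposal is correct and follows essentially the same route as the paper, which simply says the corollary follows from Proposition~\ref{prop:Nova2} exactly as Corollary~\ref{cor:VdB_mod} follows from Proposition~\ref{prop:Nova1}; you have faithfully expanded that one-line argument, including the correct use of Lemma~\ref{lemma:Ext-Ext} over $A^e$ and of the invertibility hypothesis (via \cite[Prop. 5.2.6]{Pir_Nova}) to get projectivity of $L$ in $\rmod A$.
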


Our next goal is to apply the above results to the algebra $\cO(X)$.

\begin{lemma}
\label{lemma:hom_Gamma}
Let $X$ be a Stein manifold, and let $\cF$ and $\cG$ be coherent $\cO_X$-modules.
If $\cF$ is locally free, then the canonical isomorphism
\[
\h_{\cO(X)}(\cF(X),\cG(X))\cong\Gamma(X,\cHom_{\cO_X}(\cF,\cG))
\]
is a topological isomorphism.
\end{lemma}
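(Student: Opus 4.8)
The plan is to reduce the assertion to the case $\cF=\cO_X$, where it is essentially Lemma~\ref{lemma:hom-Fre}, and then to propagate it first by additivity and then by a splitting argument. Write $\Phi_\cF$ for the canonical map
\[
\Phi_\cF\colon\Gamma(X,\cHom_{\cO_X}(\cF,\cG))\lra\h_{\cO(X)}(\cF(X),\cG(X)),
\]
which sends a sheaf morphism $\cF\to\cG$ to the induced morphism of $\cO(X)$-modules $\cF(X)\to\cG(X)$. Since the global section functor is fully faithful by Forster's equivalence \cite[2.1]{For} and $\Gamma(X,\cHom_{\cO_X}(\cF,\cG))=\Hom_{\cO_X}(\cF,\cG)$, the map $\Phi_\cF$ is already a vector space isomorphism; only its bicontinuity is at issue. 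The essential structural fact I would exploit is that $\Phi$ is a natural transformation between two \emph{additive} contravariant functors of $\cF$, so that it is compatible with finite direct sums and, in the relevant topological sense, with direct summands.

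First I would treat $\cF=\cO_X$. Here $\cHom_{\cO_X}(\cO_X,\cG)\cong\cG$ canonically, so the left-hand side is $\Gamma(X,\cG)=\cG(X)$, while by Lemma~\ref{lemma:hom-Fre} (the case $P=\cO(X)$) the right-hand side is topologically isomorphic to $\cG(X)$ via evaluation at $1$. Under these two identifications $\Phi_{\cO_X}$ is the identity map of $\cG(X)$, hence a topological isomorphism. Invoking additivity of both functors and of $\Phi$, the same conclusion follows for every finite free module $\cF=\cO_X^{\,p}$, both sides being then topologically isomorphic to $\cG(X)^p$.

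Next I would pass to an arbitrary locally free coherent $\cF$. Because $\cF$ is locally free, its module of global sections $\cF(X)$ is finitely generated and strictly projective by \cite[6.2, 6.3]{For}, hence a direct summand of $\cO(X)^N$ for some $N$. Transporting back through the equivalence $\Gamma(X,\,\cdot\,)$ the idempotent endomorphism of $\cO(X)^N$ that projects onto $\cF(X)$, I obtain an idempotent endomorphism of the free sheaf $\cO_X^N$ whose image is isomorphic to $\cF$; its image and kernel give a sheaf-level splitting $\cO_X^N\cong\cF\oplus\cF''$ with $\cF''$ coherent (indeed locally free). Applying $\cHom_{\cO_X}(\,\cdot\,,\cG)$ and taking global sections, and separately applying $\h_{\cO(X)}(\,\cdot\,,\cG(X))$ to $\cO(X)^N\cong\cF(X)\oplus\cF''(X)$, I obtain topological direct sum decompositions of the source and target of $\Phi_{\cO_X^N}$, and naturality of $\Phi$ yields
\[
\Phi_{\cO_X^N}=\Phi_\cF\oplus\Phi_{\cF''}.
\]
Since $\Phi_{\cO_X^N}$ is a topological isomorphism by the previous step and the decomposition is a topological direct sum on each side, each summand---in particular $\Phi_\cF$---is a topological isomorphism.

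The step I expect to be the main obstacle is the bookkeeping in this last reduction: one must check that the algebraic decompositions induced by $\cO_X^N\cong\cF\oplus\cF''$ are genuine topological direct sums on both the $\Gamma$-$\cHom$ side and the $\h$ side, and that the isomorphism $\Phi_{\cO_X^N}$ respects them, so that passing to the $\cF$-summand preserves bicontinuity. This is precisely the point at which local freeness of $\cF$ is used in an essential way (via \cite[6.2, 6.3]{For}): it is what guarantees that $\cF(X)$ is a complemented, finitely generated submodule of a free module, and hence that $\cF$ is a sheaf-theoretic direct summand of a finite free $\cO_X$-module. A pleasant consequence of organizing the proof this way is that the continuity of $\Phi_\cF$ is never verified by hand---it is inherited from $\Phi_{\cO_X}$---so no appeal to the open mapping theorem is required.
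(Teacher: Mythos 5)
Your argument is correct and follows essentially the same route as the paper's proof: settle the case $\cF=\cO_X$, extend to $\cO_X^p$ by additivity, and then pass to a direct summand of a finite free sheaf using \cite[6.2 and 6.3]{For} together with naturality of the canonical map. You merely spell out in detail the ``additivity and functoriality'' step that the paper leaves implicit, so there is nothing to add.
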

\begin{proof}
The case where $\cF\cong\cO_X$ is clear. In the general case
$\cF$ is isomorphic to a direct summand of
$\cO_X^p$ for some $p\in\N$ (see \cite[6.2 and 6.3]{For}), so the result
follows by additivity and functoriality.
\end{proof}

Let $X$ be a complex manifold, and let $Y\subset X$ be a closed submanifold.
Denote by $\cI$ the ideal sheaf of $Y$ in $X$.
Recall that the {\em normal sheaf} of $Y$ in $X$ is defined to be
$\cN_{Y|X}=\cHom_{\cO_Y}(\cI/\cI^2,\cO_Y)$.

\begin{prop}
\label{prop:fund_loc_mod}
Let $X$ be a Stein manifold, and let $Y\subset X$ be a closed submanifold of
codimension~$m$.
Suppose that both $X$ and $Y$ are of Liouville type.
Then for each flat Fr\'echet $\cO(X)$-module $F$ and each $i\in\Z_+$
there exist topological isomorphisms
\begin{equation}
\label{fund_loc_mod}
\Ext^i_{\cO(X)}(\cO(Y),F)\cong
\begin{cases}
\Gamma(X,{\textstyle\bigwedge^m}\cN_{Y|X})\ptens{\cO(X)} F & \text{ for } i=m,\\
0 & \text{ otherwise.}
\end{cases}
\end{equation}
\end{prop}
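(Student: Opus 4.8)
The plan is to reduce the computation for a general flat module $F$ to the case $F=\cO(X)$ by means of Lemma~\ref{lemma:Ext-Ext}, and then to compute $\Ext^i_{\cO(X)}(\cO(Y),\cO(X))$ by passing to the sheaf level, where the answer is supplied by the \emph{fundamental local isomorphism} attached to the regular embedding $Y\hookrightarrow X$. So the first thing I would do is check the hypotheses of Lemma~\ref{lemma:Ext-Ext} with $A=\cO(X)$ and $M=\cO(Y)$. The algebra $\cO(X)$ is nuclear, and, since $X$ and $Y$ are of Liouville type, Corollary~\ref{cor:For_adm} shows that the Forster resolution \eqref{For_res} is admissible; hence $\cO(Y)$ is of strictly finite type over $\cO(X)$. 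It remains only to verify that $\Ext^i_{\cO(X)}(\cO(Y),\cO(X))$ is Hausdorff for every $i$, which will drop out of the explicit computation below. Granting this, Lemma~\ref{lemma:Ext-Ext} gives, for each flat $F$, that $\Ext^i_{\cO(X)}(\cO(Y),F)$ is Hausdorff and that the canonical map
\[
\Ext^i_{\cO(X)}(\cO(Y),\cO(X))\ptens{\cO(X)} F\to\Ext^i_{\cO(X)}(\cO(Y),F)
\]
is a topological isomorphism.

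The central step is to identify $\Ext^i_{\cO(X)}(\cO(Y),\cO(X))$ with $\Gamma(X,\cExt^i_{\cO_X}(\cO_Y,\cO_X))$, in exact parallel with Proposition~\ref{prop:Tor-mod-sh}. I would start from a sheaf-level Forster resolution \eqref{For_res_sh} by locally free $\cO_X$-modules $\cP_\bullet$ and apply $\Gamma(X,\,\cdot\,)$ to obtain the admissible projective resolution $P_\bullet=\cP_\bullet(X)$ of $\cO(Y)$, so that $\Ext^i_{\cO(X)}(\cO(Y),\cO(X))=H^i(\h_{\cO(X)}(P_\bullet,\cO(X)))$. Since the $\cP_i$ are locally free, Lemma~\ref{lemma:hom_Gamma} yields a topological isomorphism of complexes
\[
\h_{\cO(X)}(\cP_\bullet(X),\cO(X))\cong\Gamma(X,\cHom_{\cO_X}(\cP_\bullet,\cO_X)).
\]
The cohomology sheaves of $\cHom_{\cO_X}(\cP_\bullet,\cO_X)$ are the coherent sheaves $\cExt^i_{\cO_X}(\cO_Y,\cO_X)$, and because $X$ is Stein the functor $\Gamma(X,\,\cdot\,)$ is exact on coherent sheaves (Cartan's Theorem~B), hence commutes with passage to cohomology of a bounded complex of coherent sheaves. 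This gives the topological isomorphism $\Ext^i_{\cO(X)}(\cO(Y),\cO(X))\cong\Gamma(X,\cExt^i_{\cO_X}(\cO_Y,\cO_X))$.

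To finish I would invoke the fundamental local isomorphism. Since $Y\hookrightarrow X$ is a regular embedding of codimension $m$ (the ideal sheaf $\cI$ is locally generated by a regular sequence of length $m$), the self-duality of the Koszul complex shows that $\cExt^i_{\cO_X}(\cO_Y,\cO_X)$ vanishes for $i\ne m$ and is isomorphic to $\bigwedge^m\cN_{Y|X}$ for $i=m$. Taking global sections, $\Ext^i_{\cO(X)}(\cO(Y),\cO(X))=0$ for $i\ne m$ and equals $\Gamma(X,\bigwedge^m\cN_{Y|X})$ for $i=m$; the latter is a nuclear Fr\'echet space, in particular Hausdorff, which settles the point postponed above (alternatively one may cite Corollary~\ref{cor:Ext-Fre}). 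Substituting these values into the isomorphism furnished by Lemma~\ref{lemma:Ext-Ext} yields \eqref{fund_loc_mod}: the right-hand side is $0$ for $i\ne m$ and $\Gamma(X,\bigwedge^m\cN_{Y|X})\ptens{\cO(X)} F$ for $i=m$.

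The step I expect to demand the most care is the sheaf-to-module transfer, namely the topological isomorphism $\Ext^i_{\cO(X)}(\cO(Y),\cO(X))\cong\Gamma(X,\cExt^i_{\cO_X}(\cO_Y,\cO_X))$. One must use the Forster resolution in two roles at once: as a locally free resolution at the sheaf level, so that $\cHom_{\cO_X}(-,\cO_X)$ genuinely computes sheaf $\cExt$ and Lemma~\ref{lemma:hom_Gamma} applies degreewise, and as an admissible projective resolution at the module level, so that it legitimately computes the topological $\Ext$; the two are then glued by pushing $\Gamma(X,\,\cdot\,)$ through cohomology via Cartan's Theorem~B. The sheaf-level computation of $\cExt^i_{\cO_X}(\cO_Y,\cO_X)$ is classical and poses no essential difficulty.
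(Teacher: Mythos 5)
Your proposal is correct and follows essentially the same route as the paper: compute $\cExt^i_{\cO_X}(\cO_Y,\cO_X)$ via the fundamental local isomorphism (the paper cites \cite[III.7.2]{RD}), transfer to the module level through the Forster resolution, Lemma~\ref{lemma:hom_Gamma}, and exactness of $\Gamma(X,\,\cdot\,)$ on coherent sheaves, and then pass from $F=\cO(X)$ to a general flat $F$ by Lemma~\ref{lemma:Ext-Ext}. Your explicit attention to the dual role of the Forster resolution (admissible projective resolution of modules versus locally free resolution of sheaves) and to the postponed Hausdorffness check matches the logic the paper leaves implicit.
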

\begin{proof}
Let $\cP\to\cO_Y\to 0$ be a Forster resolution of $\cO_Y$ over $\cO_X$.
By \cite[III.7.2]{RD}, we have
\[
\cExt^i_{\cO_X}(\cO_Y,\cO_X)\cong
\begin{cases}
{\textstyle\bigwedge^m}\cN_{Y|X} & \text{ for } i=m,\\
0 & \text{ otherwise}
\end{cases}
\]
(the proof of this fact, given in
\cite{RD} for schemes, applies to complex manifolds without changes).
Therefore we have an exact sequence
\[
\cHom_{\cO_X}(\cP,\cO_X)\to {\textstyle\bigwedge^m}\cN_{Y|X}\to 0.
\]
Applying the section functor and taking into account Lemma~\ref{lemma:hom_Gamma},
we obtain an exact sequence
\[
\h_{\cO(X)}(\cP(X),\cO(X))\to\Gamma(X,{\textstyle\bigwedge^m}\cN_{Y|X})\to 0
\]
of Fr\'echet $\cO(X)$-modules.
This yields isomorphisms \eqref{fund_loc_mod} for $F=\cO(X)$.
The general case follows from Lemma~\ref{lemma:Ext-Ext}.
\end{proof}

Let $X$ be a complex manifold embedded into $X\times X$ via the diagonal embedding.
Recall that the normal sheaf $\cN_{X|X\times X}$ is then isomorphic to the
tangent sheaf $\cT_X$ of $X$
(cf. the proof of Corollary~\ref{cor:HKR}). For each $n\in\N$, let
$T^n(X)=\Gamma(X,\bigwedge^n\cT_X)$ denote the space of holomorphic
$n$-polyvector fields on $X$. Now Proposition~\ref{prop:fund_loc_mod}
implies the following.

\begin{prop}
\label{prop:fund_loc_alg}
Let $X$ be an $n$-dimensional Stein manifold of Liouville type.
Then for each flat Fr\'echet $\cO(X)$-bimodule $F$ and each $i\in\Z_+$
there exist topological isomorphisms
\begin{equation*}
\cH^i(\cO(X),F)\cong
\begin{cases}
T^n(X)\ptens{\cO(X)^e} F & \text{ for } i=n,\\
0 & \text{ otherwise.}
\end{cases}
\end{equation*}
\end{prop}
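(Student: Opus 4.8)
The plan is to deduce this directly from Proposition~\ref{prop:fund_loc_mod} by specializing to the diagonal embedding, exactly as in the proof of Corollary~\ref{cor:HKR}. First I would identify $\cO(X)^e=\cO(X)\Ptens\cO(X)$ with $\cO(X\times X)$ via $f\otimes g\mapsto((x,y)\mapsto f(x)g(y))$; under this identification the $\cO(X)^e$-module $\cO(X)$ becomes the $\cO(X\times X)$-module $\cO(\Delta)$, where $\Delta=\{(x,x):x\in X\}$ is the diagonal, and $\cH^i(\cO(X),F)=\Ext^i_{\cO(X)^e}(\cO(X),F)$ becomes $\Ext^i_{\cO(X\times X)}(\cO(\Delta),F)$. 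Since the algebra isomorphism is topological, a flat Fr\'echet $\cO(X)$-bimodule $F$ is the same thing as a flat Fr\'echet $\cO(X\times X)$-module, so the module hypotheses match those of Proposition~\ref{prop:fund_loc_mod}.

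Before applying that proposition I must check that its standing assumptions hold for the pair $(X\times X,\Delta)$. The manifold $X\times X$ is Stein, being a product of Stein manifolds, and $\Delta$ is a closed submanifold isomorphic to $X$, of codimension $\codim_{X\times X}\Delta=\dim X=n$; since $\Delta\cong X$, it is of Liouville type by hypothesis. The one point that genuinely requires an argument is that $X\times X$ is itself of Liouville type, and this I expect to be the main (though mild) obstacle. I would prove it by a slicing argument: if $u$ is plurisubharmonic and bounded above on $X\times X$, then for each fixed $y_0$ the function $x\mapsto u(x,y_0)$ is plurisubharmonic and bounded above on $X$, hence constant because $X$ is of Liouville type; symmetrically $u$ is constant along each slice $\{x_0\}\times X$, and combining the two shows that $u$ is globally constant.

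With these verifications in hand, I would apply Proposition~\ref{prop:fund_loc_mod} to $(X\times X,\Delta)$ and the flat module $F$, obtaining
\[
\Ext^i_{\cO(X\times X)}(\cO(\Delta),F)\cong
\begin{cases}
\Gamma\bigl(X\times X,{\textstyle\bigwedge^n}\cN_{\Delta|X\times X}\bigr)\ptens{\cO(X\times X)}F & i=n,\\
0 & \text{otherwise.}
\end{cases}
\]
Finally I would translate the right-hand side back. Since the normal sheaf $\cN_{\Delta|X\times X}$ is a coherent $\cO_\Delta$-module, its sections over $X\times X$ coincide with its sections over $\Delta$; using the isomorphism $\cN_{\Delta|X\times X}\cong\cT_X$ recalled above, which identifies $\Delta$ with $X$, one gets $\Gamma(X\times X,\bigwedge^n\cN_{\Delta|X\times X})\cong\Gamma(X,\bigwedge^n\cT_X)=T^n(X)$. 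Combining this with the identifications of the first paragraph, namely $\Ext^i_{\cO(X\times X)}(\cO(\Delta),F)=\cH^i(\cO(X),F)$ and $\ptens{\cO(X\times X)}=\ptens{\cO(X)^e}$, yields precisely the asserted formula; all the isomorphisms are topological because every isomorphism used in the chain is.
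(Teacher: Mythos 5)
Your argument is correct and is essentially the paper's own: the paper derives Proposition~\ref{prop:fund_loc_alg} from Proposition~\ref{prop:fund_loc_mod} by exactly this diagonal identification $\cO(X)^e\cong\cO(X\times X)$, $\cO(X)\cong\cO(\Delta)$, $\cN_{\Delta|X\times X}\cong\cT_X$, stating only ``Now Proposition~\ref{prop:fund_loc_mod} implies the following.'' Your explicit slicing verification that $X\times X$ is again of Liouville type is a point the paper leaves implicit, and it is a welcome addition.
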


\begin{theorem}
Let $X$ be a Stein manifold, and let $Y\subset X$ be a closed submanifold of
codimension~$m$.
Suppose that both $X$ and $Y$ are of Liouville type.
Then for each Fr\'echet $\cO(X)$-module $M$ there exist vector space
isomorphisms
\begin{equation}
\label{Ext-Tor-O(X)}
\Ext^i_{\cO(X)}(\cO(Y),M)\cong
\Tor_{m-i}^{\cO(X)}\bigl(\Gamma(X,{\textstyle\bigwedge^m}\cN_{Y|X}),M\bigr).
\end{equation}
\end{theorem}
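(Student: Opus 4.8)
The plan is to apply the Van den Bergh-type Corollary~\ref{cor:VdB_mod} with $A=\cO(X)$, $M=\cO(Y)$, and $n=m=\codim_X Y$; once the hypotheses of that corollary are checked, the isomorphisms fall out immediately. Since $\cO(X)$ is a nuclear Fr\'echet algebra, the standing hypothesis on $A$ holds, so the work is entirely in verifying the conditions on $M$ and on $\Ext^\bullet_{\cO(X)}(\cO(Y),\cO(X))$.

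First I would verify that $\cO(Y)$ is of strictly finite type over $\cO(X)$. Because both $X$ and $Y$ are of Liouville type, Corollary~\ref{cor:For_adm} shows that a Forster resolution of $\cO(Y)$ over $\cO(X)$ is admissible; by Corollary~\ref{cor:For-res} its terms are finitely generated and strictly projective, so this is precisely a resolution witnessing strict finite type. Next, Theorem~\ref{thm:dh_Liouv} gives $\dh_{\cO(X)}\cO(Y)=m$, so $\cO(Y)$ has finite projective homological dimension.

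The remaining two hypotheses are supplied by Proposition~\ref{prop:fund_loc_mod} applied to the flat (indeed free) module $F=\cO(X)$: it yields $\Ext^i_{\cO(X)}(\cO(Y),\cO(X))=0$ for $i\ne m$, together with the identification
\[
L:=\Ext^m_{\cO(X)}(\cO(Y),\cO(X))\cong\Gamma\bigl(X,{\textstyle\bigwedge^m}\cN_{Y|X}\bigr),
\]
and in particular $L$ is a Fr\'echet space, hence Hausdorff. With these facts in hand, Corollary~\ref{cor:VdB_mod} applies and gives, for every $N\in\cO(X)\lmod$, isomorphisms $\Ext^i_{\cO(X)}(\cO(Y),N)\cong\Tor_{m-i}^{\cO(X)}(L,N)$; taking $N=M$ and substituting the identification of $L$ produces~\eqref{Ext-Tor-O(X)}.

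The argument is essentially a bookkeeping assembly of earlier results, so there is no genuine obstacle at this stage; the only points demanding a moment's attention are that $\cO(X)$ is flat over itself (so that Proposition~\ref{prop:fund_loc_mod} is legitimately applicable to $F=\cO(X)$ and computes $L$), and that the vanishing of $\Ext^i_{\cO(X)}(\cO(Y),\cO(X))$ for $i\ne m$ is exactly the single-degree concentration required by Corollary~\ref{cor:VdB_mod}. The substantive content has already been carried out in Proposition~\ref{prop:fund_loc_mod} (the local computation of $\cExt^\bullet_{\cO_X}(\cO_Y,\cO_X)$ and the passage to global sections) and in Theorem~\ref{thm:dh_Liouv}.
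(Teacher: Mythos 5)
Your proposal is correct and is essentially the paper's own proof: the paper simply says ``Apply Proposition~\ref{prop:fund_loc_mod} and Corollary~\ref{cor:VdB_mod},'' and your verification of the hypotheses (strictly finite type via the admissible Forster resolution, finite projective dimension via Theorem~\ref{thm:dh_Liouv}, and the concentration of $\Ext^\bullet_{\cO(X)}(\cO(Y),\cO(X))$ in degree $m$ with $L\cong\Gamma(X,{\textstyle\bigwedge^m}\cN_{Y|X})$ via Proposition~\ref{prop:fund_loc_mod} at $F=\cO(X)$) is exactly the bookkeeping the paper leaves implicit.
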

\begin{proof}
Apply Proposition~\ref{prop:fund_loc_mod} and Corollary~\ref{cor:VdB_mod}.
\end{proof}

\begin{theorem}
\label{thm:VdB}
Let $X$ be an $n$-dimensional Stein manifold of Liouville type.
Then $\cO(X)$ satisfies $\VdB(n)$ with dualizing bimodule $T^n(X)$.
\end{theorem}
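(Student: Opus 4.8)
The plan is to derive the statement directly from Corollary~\ref{cor:VdB_alg}, applied to $A=\cO(X)$ with the candidate dualizing bimodule $L=T^n(X)$. Thus I must verify the four hypotheses of that corollary: that $\cO(X)$ is a nuclear Fr\'echet algebra of finite type and of finite bidimension; that $\cH^i(\cO(X),\cO(X)^e)=0$ for $i\ne n$; and that $L=\cH^n(\cO(X),\cO(X)^e)$ is Hausdorff and invertible, being identified along the way with $T^n(X)$ as a Fr\'echet $\cO(X)$-bimodule.

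The structural hypotheses are already available. Nuclearity of $\cO(X)$ was recalled in Section~\ref{sect:prelim}. The discussion following Corollary~\ref{cor:Ext-Fre} (take $Y=\Delta$ inside $X\times X$ and identify $\cO(X)^e$ with $\cO(X\times X)$) shows that $\cO(X)$ is of finite type. Finite bidimension is immediate from Corollary~\ref{cor:dgdb}, which gives $\db\cO(X)=\dim X=n<\infty$.

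For the cohomology computation I would apply Proposition~\ref{prop:fund_loc_alg} to the bimodule $F=\cO(X)^e$, which is flat because it is free over $\cO(X)^e$. This yields $\cH^i(\cO(X),\cO(X)^e)=0$ for $i\ne n$, together with
\[
\cH^n(\cO(X),\cO(X)^e)\cong T^n(X)\ptens{\cO(X)^e}\cO(X)^e\cong T^n(X),
\]
where the final step is the standard identification $M\ptens{\cO(X)^e}\cO(X)^e\cong M$. Since $T^n(X)=\Gamma(X,\bigwedge^n\cT_X)$ is the section module of a coherent $\cO_X$-module, it is a nuclear Fr\'echet space and in particular Hausdorff, as required.

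The one point deserving genuine care is invertibility of $L$. Because $\dim X=n$, the tangent sheaf $\cT_X$ has rank $n$, so $\bigwedge^n\cT_X$ is a line sheaf; by the remarks on invertible bimodules at the start of Section~\ref{sect:VdB}, its section module $\Gamma(X,\bigwedge^n\cT_X)$ is then an invertible $\cO(X)$-bimodule, with inverse $\Gamma(X,\cHom_{\cO_X}(\bigwedge^n\cT_X,\cO_X))$. The hard part will be to check that the $\cO(X)$-bimodule structure transported onto $T^n(X)$ through the isomorphism of Proposition~\ref{prop:fund_loc_alg} agrees with the natural (symmetric) one coming from the line sheaf. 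This should follow because the two $\cO(X)$-actions arise from the two factors of $\cO(X)^e\cong\cO(X\times X)$, and these restrict to one and the same multiplication on sections over the diagonal $\Delta\cong X$. Once this identification of bimodule structures is confirmed, all hypotheses of Corollary~\ref{cor:VdB_alg} are met, and the assertion that $\cO(X)$ satisfies $\VdB(n)$ with dualizing bimodule $T^n(X)$ follows at once.
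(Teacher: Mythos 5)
Your proof is correct and takes essentially the same route as the paper: the paper's own proof is literally the instruction to apply Proposition~\ref{prop:fund_loc_alg} and Corollary~\ref{cor:VdB_alg}, and your write-up just makes explicit the verification of the hypotheses (finite type via the diagonal in $X\times X$, finite bidimension from Corollary~\ref{cor:dgdb}, the computation with $F=\cO(X)^e$, Hausdorffness and invertibility of the line-sheaf section module $T^n(X)$). The compatibility of the transported bimodule structure on $\cH^n(\cO(X),\cO(X)^e)$ with the natural one on $\Gamma(X,\bigwedge^n\cT_X)$, which you rightly flag as the one delicate point, is left implicit in the paper as well, and your justification via the two factors of $\cO(X\times X)$ restricting to the same multiplication on the diagonal is the correct one.
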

\begin{proof}
Apply Proposition~\ref{prop:fund_loc_alg} and Corollary~\ref{cor:VdB_alg}.
\end{proof}

\begin{remark}
\label{rem:wdh_C^infty}
In \cite[5.4.1]{Pir_Nova}, we proved \eqref{Ext-Tor-O(X)} in the situation
where $X$ is a smooth real manifold, $\cO(X)=C^\infty(X)$ is the Fr\'echet
algebra of smooth functions on $X$, and $Y\subset X$ is a closed submanifold
of codimension $m$.
This clearly implies that $\dh_{C^\infty(X)} C^\infty(Y)=m$
\cite[5.4.2]{Pir_Nova}. The fact that $\wdh_{C^\infty(X)} C^\infty(Y)=m$
was not stated explicitly in \cite{Pir_Nova}, but it also easily follows from
the above-mentioned ``smooth'' version of~\eqref{Ext-Tor-O(X)}.
Indeed, letting $A=C^\infty(X)$, $B=C^\infty(Y)$, and
$L=\Gamma(X,{\textstyle\bigwedge^m}\cN_{Y|X})$, we obtain
\[
\Tor_m^A(L,B)\cong\Ext_A^0(B,B)=\h_A(B,B)=\h_B(B,B)\cong B\ne 0,
\]
whence $\wdh_A B\ge m$. Since $\dh_A B=m$ (see above), we conclude that $\wdh_A B=m$
as well.
\end{remark}

\section{Hyperconvex submanifolds and projective dimension}
\label{sect:hyper}

Let $X$ be a complex manifold. Recall from \cite{Stehle} that
$X$ is {\em hyperconvex} if there exists a plurisubharmonic function
$\rho\colon X\to [-\infty,0)$ such that for each $c<0$ the set
$\ol{\{ x\in X : \rho(x)\le c\}}$ is compact.
For example, each analytic polyhedron in a Stein manifold \cite{Stehle}
and each relatively compact pseudoconvex domain with
a $C^2$ boundary in a Stein manifold \cite{Died_For} are hyperconvex.
More examples can be found in \cite{Aytuna_survey}.
Note that a hyperconvex manifold $X$ can never be of Liouville type
(unless $\dim X=0$).

By definition \cite{Vogt_pow_fin} (see also \cite{MV}),
a Fr\'echet space $E$ has property $(\ol{\Omega})$
if the topology on $E$ can be determined
by an increasing sequence $\{ \|\cdot\|_n : n\in\N\}$ of seminorms satisfying the
following condition: for each $p\in\N$ there exists $q\in\N$ such that for each
$k\in\N$ there exists $C>0$ satisfying
\[
\| y\|_q^* \le C (\| y\|_p^*)^{1/2} (\| y\|_k^*)^{1/2}\qquad (y\in E^*).
\]
Clearly, $(\ol{\Omega})$ implies $(\Omega)$ (see Section~\ref{sect:Liouv}),
but not vice versa. If $X$ is a Stein manifold, then $\cO(X)$ has
$(\ol{\Omega})$ if and only if $X$ is hyperconvex \cite{Zah_iso,Aytuna_sp_anal}.
For more results in this area, we refer to \cite{Aytuna_survey}.

Let $E$ and $F$ be locally convex spaces. Following \cite{Vogt_bdd}, we say that
a linear map $\varphi\colon E\to F$ is {\em bounded} if there exists a
$0$-neighborhood $U\subset E$ such that $\varphi(U)\subset F$ is bounded.
Clearly, each bounded linear map is continuous.
On the other hand, the identity map on a locally convex space $E$ is bounded
if and only if $E$ is normable. In this section, we will use the following important
result by D.~Vogt~\cite{Vogt_bdd} (see also \cite[29.21]{MV}):
if $E$ is a Fr\'echet space with $(\ol{\Omega})$
and $F$ is a Fr\'echet space with $(DN)$, then each continuous linear map
from $E$ to $F$ is bounded.

The set of all bounded
linear maps from $E$ to $F$ is denoted by $\cLB(E,F)$; obviously, this is a vector
subspace of $\cL(E,F)$. Note also that $\cLB(E,F)$, like $\cL(E,F)$, is a bifunctor
on the category of locally convex spaces with values in the category of vector spaces.

Recall that an inverse system $(E_\lambda,\tau^\lambda_\mu)$ of locally convex spaces
is {\em reduced} if the canonical projections
$\tau_\mu\colon \varprojlim E_\lambda\to E_\mu$ have dense ranges.
A {\em reduced inverse limit} is the inverse limit of a reduced inverse system
of locally convex spaces.

\begin{lemma}
\label{lemma:LB_lim}
Let $E=\varprojlim E_\lambda$ be a reduced inverse limit of locally convex spaces.
Then for each Hausdorff l.c.s. $F$ the canonical maps $\cLB(E_\lambda,F)\to\cLB(E,F)$
yield a vector space isomorphism
\begin{equation}
\label{LB_lim}
\varinjlim\cLB(E_\lambda,F)\to\cLB(E,F).
\end{equation}
\end{lemma}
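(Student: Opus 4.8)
The plan is to show that the canonical map \eqref{LB_lim} is an isomorphism by treating injectivity and surjectivity separately, the latter being the substantial part. First I would record that the assignment $\psi\mapsto\psi\circ\tau_\lambda$ really does land in $\cLB(E,F)$: since $\tau_\lambda$ is continuous and $\psi$ is bounded, the preimage under $\tau_\lambda$ of a $0$-neighbourhood witnessing the boundedness of $\psi$ is a $0$-neighbourhood of $E$ whose image under $\psi\circ\tau_\lambda$ is bounded. These maps are compatible with the transition maps, because $\tau^\lambda_\mu\circ\tau_\lambda=\tau_\mu$ yields $(\psi\circ\tau^\lambda_\mu)\circ\tau_\lambda=\psi\circ\tau_\mu$ whenever $\mu\le\lambda$; hence they induce the map \eqref{LB_lim}.

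Injectivity is immediate from reducedness. If $\psi\in\cLB(E_\lambda,F)$ satisfies $\psi\circ\tau_\lambda=0$, then $\psi$ vanishes on the range $\tau_\lambda(E)$, which is dense in $E_\lambda$; since $\psi$ is continuous and $F$ is Hausdorff, $\psi=0$. Thus each structure map $\cLB(E_\lambda,F)\to\cLB(E,F)$ is already injective, and so is the induced map on the direct limit.

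For surjectivity, take $\varphi\in\cLB(E,F)$ and choose a $0$-neighbourhood $U\subset E$ with $\varphi(U)$ bounded. Since a base of $0$-neighbourhoods of $E$ is given by finite intersections of sets $\tau_\mu^{-1}(V_\mu)$, the directedness of the index set together with $\tau^\lambda_\mu\circ\tau_\lambda=\tau_\mu$ lets me shrink $U$ to a single set $\tau_\lambda^{-1}(V)$ with $V$ a $0$-neighbourhood in $E_\lambda$. I then claim $\Ker\tau_\lambda\subseteq\Ker\varphi$: if $x\in\Ker\tau_\lambda$, then $tx\in\tau_\lambda^{-1}(V)$ for every scalar $t$, so the entire line $\{t\,\varphi(x)\}$ lies in the bounded set $\varphi(U)$, which forces $\varphi(x)=0$ because $F$ is Hausdorff. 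Consequently $\varphi$ factors through $\tau_\lambda$ as a linear map $\psi_0$ on the dense subspace $\tau_\lambda(E)\subset E_\lambda$, defined by $\psi_0(\tau_\lambda x)=\varphi(x)$; and the identity $\psi_0(\tau_\lambda(E)\cap V)=\varphi(\tau_\lambda^{-1}(V))$ shows that $\psi_0$ is bounded for the subspace topology.

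The remaining, and principal, step is to extend $\psi_0$ from the dense subspace $\tau_\lambda(E)$ to a bounded linear map $\psi$ on all of $E_\lambda$; once this is done, $\psi\circ\tau_\lambda=\varphi$ exhibits $\varphi$ as the image of $[\psi]$. This is the one point where more than Hausdorffness of $F$ enters: a bounded (hence continuous) linear map on a dense subspace carries convergent nets to Cauchy nets, so the extension exists as soon as $F$ is complete, as it is in the Fr\'echet applications we have in mind. I expect this extension to be the only genuine obstacle; the factorization through $\tau_\lambda$ and the injectivity are formal consequences of reducedness and of the defining property of bounded maps. I would note in passing that quasi-completeness of $F$ already suffices, since the relevant Cauchy nets remain inside a fixed bounded subset of $F$.
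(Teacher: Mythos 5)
Your proof is correct and follows essentially the same route as the paper's: injectivity from density of the ranges $\tau_\lambda(E)$ plus Hausdorffness of $F$, and surjectivity by shrinking $U$ to $\tau_\lambda^{-1}(V)$, factoring $\varphi$ through the dense subspace $\tau_\lambda(E)\subseteq E_\lambda$, and extending by continuity. Two remarks: your caveat about the extension step is well taken, since the paper performs the same extension without comment on completeness of $F$ (and the lemma is only applied with $F$ a nuclear Fr\'echet space); and the one detail you leave implicit is that the extension $\psi$ is still \emph{bounded}, not merely continuous, which follows because $V\cap\tau_\lambda(E)$ is dense in the open set $V$, so that $\psi(V)\subseteq\overline{\psi_0(V\cap\tau_\lambda(E))}=\overline{B}$ is bounded.
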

\begin{proof}
Since the projections $\tau_\lambda\colon E\to E_\lambda$ have dense ranges,
it follows that the maps $\cLB(E_\lambda,F)\to\cLB(E,F)$ are injective,
and hence \eqref{LB_lim} is injective as well.
Let now $\varphi\in\cLB(E,F)$, and let $U\subset E$ be a $0$-neighborhood such that
$B=\varphi(U)$ is bounded in $F$. Without loss of generality we may assume that
$U=\tau_\lambda^{-1}(V)$ for some $\lambda$ and for some open $0$-neighborhood
$V\subset E_\lambda$. Set $E_\lambda^0=\Im\tau_\lambda\subset E_\lambda$,
and define $\varphi_\lambda^0\colon E_\lambda^0\to F$ by
\[
\varphi_\lambda^0(\tau_\lambda(x))=\varphi(x)\qquad (x\in E).
\]
To see that $\varphi_\lambda^0$ is well defined, assume that $\tau_\lambda(x)=0$.
Then for each $\eps>0$ we have $\tau_\lambda(x)\in\eps V$, whence $x\in\eps U$
and $\varphi(x)\in\eps B$. Since $F$ is Hausdorff, this implies that $\varphi(x)=0$,
showing that $\varphi_\lambda^0$ is well defined. Now observe that
\[
\varphi_\lambda^0(V\cap E_\lambda^0)=\varphi_\lambda^0(\tau_\lambda(U))=B,
\]
and so $\varphi_\lambda^0$ is bounded. Since $E_\lambda^0$ is dense in $E_\lambda$,
$\varphi_\lambda^0$ uniquely extends to a continuous linear map
$\varphi_\lambda\colon E_\lambda\to F$. Moreover, since $V\cap E_\lambda^0$
is dense in $V$, it follows that
$\varphi_\lambda(V)\subset\overline{B}$, and so $\varphi_\lambda$ is bounded.
It remains to observe that $\varphi$ is the image of $\varphi_\lambda$ under
the canonical map $\cLB(E_\lambda,F)\to\cLB(E,F)$. Therefore \eqref{LB_lim} is onto.
\end{proof}

Before formulating the next result, recall from \cite[II.3.1]{Groth} that,
if $E$ is a Banach space and $F$ is a nuclear Fr\'echet space, then
there exists a topological isomorphism
\begin{equation}
\label{Groth_iso}
F\Ptens E^*\to\cL(E,F),\quad
y\otimes f\mapsto (x\mapsto f(x)y).
\end{equation}
If, in addition, $E$ and $F$ are left Fr\'echet $A$-modules, then it is easy to check
that \eqref{Groth_iso} is an $A$-bimodule isomorphism.

\begin{lemma}
\label{lemma:Ext_hi}
Let $A$ be a Fr\'echet-Arens-Michael algebra of finite type.
Suppose that $A$ satisfies $\VdB(n)$ with nuclear dualizing bimodule $L$.
Let now $M\in A\lmod$, and assume that all continuous linear maps from
$M$ to $L^{-1}$ are bounded. Then there exists a vector space isomorphism
\begin{equation}
\label{Ext_hi}
\Ext_A^n(M,L^{-1})\cong M^*.
\end{equation}
As a consequence, if $M\ne 0$, then $\dh_A M=n$.
\end{lemma}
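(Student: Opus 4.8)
The plan is to reduce to the case where $M$ is a Banach module, settle that case with the Van den Bergh and Grothendieck isomorphisms, and then recover the general case by a limiting argument driven by the boundedness hypothesis. First I would exploit finite type to upgrade \eqref{Ext_Hoch} beyond Banach modules. Fixing an admissible resolution $Q\to A\to 0$ of $A$ in $A^e\lmod$ by finitely generated strictly projective bimodules, the complex $Q\ptens{A}M\to M\to 0$ is an admissible resolution of $M$ by projective left modules, and the tensor-hom adjunction $\h_A(Q_i\ptens{A}M,N)\cong\h_{A^e}(Q_i,\cL(M,N))$ yields a natural isomorphism $\Ext_A^n(M,N)\cong\cH^n(A,\cL(M,N))$ for \emph{every} $M\in A\lmod$. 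Moreover, since each $Q_i$ is finitely generated and strictly projective, $\h_{A^e}(Q_i,-)$, and hence $\cH^n(A,-)$, commutes with algebraic filtered inductive limits of bimodules.

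For a Banach module $M_\lambda$ I would chain \eqref{Ext_Hoch}, $\VdB(n)$, and \eqref{Groth_iso}:
\[
\Ext_A^n(M_\lambda,L^{-1})\cong\cH^n(A,\cL(M_\lambda,L^{-1}))\cong\cH_0\bigl(A,L\ptens{A}\cL(M_\lambda,L^{-1})\bigr)\cong\cH_0(A,A\Ptens M_\lambda^*).
\]
Here invertibility ($L\ptens{A}L^{-1}\cong A$) collapses $L\ptens{A}(L^{-1}\Ptens M_\lambda^*)$ to the bimodule $A\Ptens M_\lambda^*$ with left action on $A$ and right action on $M_\lambda^*$; computing coinvariants then identifies $\cH_0(A,A\Ptens M_\lambda^*)$ with $M_\lambda^*$ through $[b\otimes f]\mapsto f\cdot b$. (This step uses that $L^{-1}$ is nuclear, which I would verify for the dualizing bimodule in hand.)

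Because $A$ is Arens--Michael, I would write $M=\varprojlim M_\lambda$ as a reduced inverse limit of Banach $A$-modules. The boundedness hypothesis says $\cL(M,L^{-1})=\cLB(M,L^{-1})$, and Lemma~\ref{lemma:LB_lim} applied with $F=L^{-1}$, together with the fact that every continuous map out of a Banach space is bounded, yields a bimodule isomorphism $\cL(M,L^{-1})\cong\varinjlim_\lambda\cL(M_\lambda,L^{-1})$. Applying $\cH^n(A,-)$, which commutes with this inductive limit, and using $M^*\cong\varinjlim_\lambda M_\lambda^*$ (Lemma~\ref{lemma:LB_lim} with $F=\CC$), I would obtain
\[
\Ext_A^n(M,L^{-1})\cong\cH^n(A,\cL(M,L^{-1}))\cong\varinjlim_\lambda\cH^n(A,\cL(M_\lambda,L^{-1}))\cong\varinjlim_\lambda M_\lambda^*\cong M^*.
\]

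The main obstacle is exactly this passage from Banach modules to a general $M$: Grothendieck's isomorphism $\cL(M,L^{-1})\cong L^{-1}\Ptens M^*$ is available only for Banach $M$, so the $\cH_0$-computation must be carried out at the Banach level and transported across the inductive limit; it is the boundedness hypothesis, through Lemma~\ref{lemma:LB_lim}, that realizes $\cL(M,L^{-1})$ as the inductive limit of the Fr\'echet bimodules $\cL(M_\lambda,L^{-1})$ and thereby makes the finite type interchange applicable. For the final assertion, $\VdB(n)$ forces $\cH^i(A,-)=0$ for $i>n$, whence $\db A\le n$ and so $\dg A\le n$, giving $\dh_A M\le n$; and if $M\ne 0$ then $M^*\ne 0$ by Hahn--Banach, so $\Ext_A^n(M,L^{-1})\ne 0$ gives $\dh_A M\ge n$, and therefore $\dh_A M=n$.
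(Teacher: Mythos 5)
Your proposal is correct and follows essentially the same route as the paper: the Banach case via $\VdB(n)$ together with Grothendieck's isomorphism $\cL(M_\lambda,L^{-1})\cong L^{-1}\Ptens M_\lambda^*$, and the general case by writing $M$ as a reduced inverse limit of Banach modules and using Lemma~\ref{lemma:LB_lim} plus finite type to pass the cohomology through the resulting inductive limit. The paper phrases the interchange directly on the complex $\h_A(P\ptens{A}M,L^{-1})$ for a finitely generated strictly projective bimodule resolution $P$ rather than through $\cH^n(A,\cL(M,-))$ (which for non-Banach $M$ is not a Fr\'echet bimodule), but under the tensor--hom adjunction this is the same computation.
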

\begin{proof}
First suppose that $M$ is a Banach $A$-module.
Using \eqref{Tor_Hoch}, \eqref{Ext_Hoch}, property $\VdB(n)$,
and \eqref{Groth_iso}, we obtain the
following vector space isomorphisms:
\[
\begin{split}
\Ext^n_A(M,L^{-1})
&\cong \cH^n(A,\cL(M,L^{-1}))
\cong\cH_0(A,L\ptens{A}\cL(M,L^{-1}))\\
&\cong\cH_0(A,L\ptens{A} L^{-1}\Ptens M^*)
\cong\cH_0(A,A\Ptens M^*)
\cong\Tor_0^A(M^*,A)\cong M^*.
\end{split}
\]
This proves \eqref{Ext_hi} in the case where $M$ is a Banach module.

In the general case, represent $M$ as a reduced inverse limit
$M=\varprojlim M_\lambda$ of left Banach $A$-modules (see \cite[Prop. 3.5]{Pir_qfree}).
By Lemma~\ref{lemma:LB_lim}, we have vector space isomorphisms
\[
\cL(M,L^{-1})=\cLB(M,L^{-1})\cong\varinjlim\cLB(M_\lambda,L^{-1})
=\varinjlim\cL(M_\lambda,L^{-1}).
\]
In other words, for $P=A\Ptens A$ we have a vector space isomorphism
\begin{equation}
\label{h_inj}
\h_A(P\ptens{A} M,L^{-1})\cong \varinjlim\h_A(P\ptens{A} M_\lambda, L^{-1}).
\end{equation}
By additivity and functoriality, \eqref{h_inj} holds for each finitely generated,
strictly projective Fr\'echet $A$-bimodule $P$.

Now let $P\to A\to 0$ be a resolution of $A$ in $A\bimod A$ such that all the $P_i$'s
are finitely generated and strictly projective.
Then for each $N\in A\lmod$ $P\ptens{A} N\to N\to 0$ is a projective resolution of
$N$ in $A\lmod$. Using \eqref{h_inj} and the
already established isomorphism \eqref{Ext_hi} for the Banach modules $M_\lambda$,
we obtain vector space isomorphisms
\[
\begin{split}
\Ext^n_A(M,L^{-1})
=H^n(\h_A(P\ptens{A} M,L^{-1}))
&\cong\varinjlim H^n(\h_A(P\ptens{A} M_\lambda,L^{-1}))\\
&=\varinjlim\Ext^n_A(M_\lambda,L^{-1})
\cong\varinjlim M_\lambda^*
\cong M^*.
\end{split}
\]
This proves \eqref{Ext_hi}, which in turn implies that $\dh_A M\ge n$ provided that $M\ne 0$.
On the other hand, $\VdB(n)$ readily implies that\footnote[1]{In fact, it is easy
to show that if $A$ satisfies $\VdB(n)$, then
$\db A=\wdb A=n$ \cite{Pir_dgdb}. If, in addition, $A$ is nuclear and
locally $m$-convex, then $\dg A=\wdg A=n$ as well [loc. cit.].} $\db A\le n$,
whence $\dh_A M\le n$. This completes the proof.
\end{proof}

\begin{theorem}
\label{thm:dh_hyper_Liouv}
Let $X$ be a Stein manifold, and let $Y$ be a closed submanifold of $X$.
Suppose that $X$ is of Liouville type and that $Y$ is hyperconvex.
Then $\dh_{\cO(X)}\cO(Y)=\dim X$.
\end{theorem}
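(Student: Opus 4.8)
The plan is to deduce the theorem directly from Lemma~\ref{lemma:Ext_hi}, applied to $A=\cO(X)$, $M=\cO(Y)$, and $n=\dim X$. The first task is to check that $\cO(X)$ meets the standing hypotheses of that lemma. It is a nuclear Fr\'echet-Arens-Michael algebra (the compact-open topology is defined by the submultiplicative sup-seminorms $\|f\|_K=\sup_K|f|$), and it is of finite type: as noted after Corollary~\ref{cor:Ext-Fre}, this holds whenever the diagonal $\Delta\cong X$ is of strictly finite type in $X\times X$, which is guaranteed once $X\times X$ and $\Delta$ are both of Liouville type, and $X$ of Liouville type forces $X\times X$ to be of Liouville type as well. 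By Theorem~\ref{thm:VdB}, $\cO(X)$ satisfies $\VdB(n)$ with dualizing bimodule $L=T^n(X)=\Gamma(X,\bigwedge^n\cT_X)$; since this is the space of global sections of a coherent $\cO_X$-module it is a nuclear Fr\'echet space, so $L$ is nuclear as Lemma~\ref{lemma:Ext_hi} demands.

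The remaining hypothesis of Lemma~\ref{lemma:Ext_hi}, and the crux of the argument, is that every continuous linear map from $M=\cO(Y)$ to $L^{-1}$ is bounded. First I would identify $L^{-1}$. Because $L=\cL(X)$ for the line sheaf $\cL=\bigwedge^n\cT_X$, the inversion rule $\cL(X)^{-1}=\cL^{-1}(X)$ recalled in Section~\ref{sect:VdB} gives $L^{-1}=\Gamma(X,\bigwedge^n\Omega^1_X)=\Omega^n(X)$, the space of holomorphic $n$-forms on $X$. To establish boundedness I would invoke Vogt's theorem \cite{Vogt_bdd}: a continuous linear map from a Fr\'echet space with $(\ol{\Omega})$ to a Fr\'echet space with $(DN)$ is automatically bounded. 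It therefore suffices to verify that the source $\cO(Y)$ has $(\ol{\Omega})$ and that the target $\Omega^n(X)$ has $(DN)$.

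For the source, the property $(\ol{\Omega})$ of $\cO(Y)$ is precisely the linear-topological characterization of hyperconvexity \cite{Zah_iso,Aytuna_sp_anal}, and $Y$ is hyperconvex by hypothesis. For the target, note that $\bigwedge^n\Omega^1_X$ is locally free of rank one, so by Forster's results $\Omega^n(X)$ is a finitely generated, strictly projective $\cO(X)$-module, hence a direct summand of $\cO(X)^p$ for some $p$. Since $X$ is of Liouville type, $\cO(X)$ has $(DN)$, and because $(DN)$ is stable under finite direct sums and passes to complemented subspaces, $\Omega^n(X)$ inherits $(DN)$ (this is the same reasoning used in the proof of Corollary~\ref{cor:For_adm}). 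Thus every continuous map $\cO(Y)\to\Omega^n(X)$ is bounded.

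With all hypotheses verified and $\cO(Y)\ne 0$, Lemma~\ref{lemma:Ext_hi} yields $\Ext^n_{\cO(X)}(\cO(Y),\Omega^n(X))\cong\cO(Y)^*\ne 0$ and hence $\dh_{\cO(X)}\cO(Y)=n=\dim X$. I expect the genuine content to be concentrated in the boundedness step: it is exactly the interplay of $(\ol{\Omega})$ on the source (coming from the hyperconvexity of $Y$) and $(DN)$ on the target (coming from the Liouville property of $X$) that makes the top $\Ext$-group nonvanishing, and so pins the projective dimension at $\dim X$ rather than at the codimension $m$ obtained in the Liouville-vs-Liouville case of Theorem~\ref{thm:dh_Liouv}.
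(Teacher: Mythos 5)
Your proposal is correct and follows essentially the same route as the paper: identify $L^{-1}=\Omega^n(X)$, note that $\Omega^n(X)$ inherits $(DN)$ from $\cO(X)$ (Liouville property of $X$) as a direct summand of some $\cO(X)^p$, that $\cO(Y)$ has $(\ol{\Omega})$ by hyperconvexity, apply Vogt's boundedness theorem, and conclude via Theorem~\ref{thm:VdB} and Lemma~\ref{lemma:Ext_hi}. The only difference is that you spell out the verification of the finite-type and nuclearity hypotheses of Lemma~\ref{lemma:Ext_hi} (including the observation that $X$ Liouville forces $X\times X$ Liouville), which the paper leaves implicit.
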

\begin{proof}
Since $X$ is of Liouville type, it follows that $\cO(X)$ has
property $(DN)$ \cite{Zah_iso,Vogt_some_results,Aytuna_sp_anal}.
Moreover, if $\cF$ is a locally free $\cO_X$-module, then $\cF$ is a direct
summand of $\cO_X^p$ for some $p$ \cite[6.2 and 6.3]{For},
and so $\cF(X)$ has property $(DN)$. In particular, the space
$\Omega^n(X)$ of holomorphic $n$-forms has property $(DN)$, where $n=\dim X$.
On the other hand, since $Y$ is hyperconvex, it follows that $\cO(Y)$ has property
$(\ol{\Omega})$ \cite{Zah_iso,Aytuna_sp_anal}, and so
all continuous linear maps from $\cO(Y)$ to $\Omega^n(X)$ are bounded \cite{Vogt_bdd}.
Taking into account Theorem~\ref{thm:VdB}, we see that the algebra $A=\cO(X)$
and the modules $M=\cO(Y)$, $L^{-1}=\Omega^n(X)$ satisfy the conditions
of Lemma~\ref{lemma:Ext_hi}. Therefore $\dh_{\cO(X)}\cO(Y)=n$, as required.
\end{proof}

\begin{remark}
\label{rem:nonsplit}
By comparing Corollary~\ref{cor:For-res} and Theorem~\ref{thm:dh_hyper_Liouv},
we see that, under the conditions of Theorem~\ref{thm:dh_hyper_Liouv},
there is no {\em admissible} Forster resolution
of $\cO(Y)$ over $\cO(X)$ (unless $\dim Y=0$). This can also be seen directly
as follows. If  resolution \eqref{For_res} were admissible,
then the underlying Fr\'echet space of $\cO(Y)$
would be a direct summand of $P_0$ and would have $(\ol{\Omega})$ and
$(DN)$ simultaneously, which is impossible for a nonnormable space.
\end{remark}

\begin{example}
Let $Y$ be a nonsingular affine algebraic variety in $\CC^n$.
Using Theorems~\ref{thm:wdh} and~\ref{thm:dh_Liouv}, we see that
\[
\dh\nolimits_{\cO(\CC^n)}\cO(Y)=\wdh\nolimits_{\cO(\CC^n)}\cO(Y)=n-\dim Y.
\]
Let now $Y$ be any hyperconvex Stein manifold (for example, a polydisk or a
ball in $\CC^m$). By the Remmert-Bishop-Narasimhan Embedding Theorem, $Y$ can be embedded
into $\CC^n$ for $n=2\dim Y+1$. Using Theorems~\ref{thm:wdh}
and~\ref{thm:dh_hyper_Liouv}, we see that
\[
\wdh\nolimits_{\cO(\CC^n)}\cO(Y)=n-\dim Y,\quad\text{but}\;
\dh\nolimits_{\cO(\CC^n)}\cO(Y)=n.
\]
\end{example}

In conclusion, let us formulate an open problem due to A.~Ya.~Helemskii \cite{X_31}:

\begin{problem}[cf. Corollaries~\ref{cor:wdg} and~\ref{cor:dgdb}]
Let $X$ be a Stein manifold. Is it true that
$\dg\cO(X)=\dim X$ or $\db\cO(X)=\dim X$?
\end{problem}

\begin{ackn}
The author thanks A.~Ya.~Helemskii and P.~Doma\'nski for helpful discussions.
\end{ackn}

\end{document}